\newtheorem{theorem}{Theorem}[section]
\newtheorem{lemma}[theorem]{Lemma}
\newtheorem{proposition}[theorem]{Proposition}
\newtheorem{corollary}[theorem]{Corollary}
\theoremstyle{definition}
\newtheorem{definition}[theorem]{Definition}
\newtheorem{remark}[theorem]{Remark}
\newtheorem{notation}[theorem]{Notation}
\newtheorem{example}[theorem]{Example}
\numberwithin{equation}{section}
\newcommand{\co}{\mathop{\mathrm{co}}}
\newcommand{\cS}{\mathcal{S}}
\newcommand{\lhu}{\leftharpoonup}
\newcommand{\rhu}{\rightharpoonup}
\newcommand{\ga}{\mathfrak{a}}
\newcommand{\YD}{\mathcal{YD}}
\newcommand{\id}{\mathop{\mathrm{id}}}
\newcommand{\Ker}{\mathop{\mathrm{Ker}}}
\newcommand{\dotrtimes}{\mathop{\raisebox{0.2ex}{\makebox[0.9em][l]{${\scriptstyle >\joinrel\lessdot}$}}\raisebox{0.12ex}{$\shortmid$}}}
\newcommand{\cH}{\mathcal{H}}
\newcommand{\cA}{\mathcal{A}}
\newcommand{\gr}{\mathop{\mathrm{gr}}}
\newcommand{\cG}{\mathcal{G}}
\newcommand{\cM}{\mathcal{M}}
\newcommand{\cD}{\mathcal{D}}
\newcommand{\cDred}{\mathcal{D}_{\mathrm{red}}}
\newcommand{\diag}{\mathop{\mathrm{diag}}}
\newcommand{\bbz}{\mathbb{Z}}
\newcommand{\gh}{\mathfrak{h}}
\newcommand{\ord}{\mathop{\mathrm{ord}}}
\newcommand{\ch}{\mathop{\mathrm{ch}}}
\begin{document}

\title[quantized enveloping algebras]{Construction of quantized enveloping algebras by cocycle deformation}
\author{Akira Masuoka}  
\address{Institute of Mathematics, University of Tsukuba, Tsukuba, Ibaraki 305-8571 Japan}
\email{akira@math.tsukuba.ac.jp}

\subjclass[2000]{16W30, 17B37}
\keywords{Hopf algebra; Quantized enveloping algebra; Cocycle deformation; Quantum double}

\begin{abstract}
By using cocycle deformation, we construct a certain class of Hopf algebras, containing the quantized enveloping algebras and their analogues, from what we call pre-Nichols algebras.
Our construction generalizes in some sense the known construction by (generalized) quantum doubles, but unlike in the known situation, it saves us from difficulties in checking complicated defining relations. 
\end{abstract}

\maketitle

\section*{Introduction}

When one studies the quantized enveloping algebras (\cite{Dr}, \cite{Ji}), one might have difficulties in checking complicated defining relations, or in encountering one after another, various analogues or variations, such as super-analogue, multiparameter version.
In this paper we wish to save ourselves from these difficulties, by using cocycle deformation \cite{D}.
Roughly speaking, our results will allow us to construct the quantized enveloping algebra $U_{q}$ on the tensor product $U_{q}^{-}\otimes U_{q}^{0}\otimes U_{q}^{+}$ of the familiar three subalgebras, only by giving product rules among distinct parts, such as
\begin{equation}
\label{eq0.1}
E_{i}F_{j}-F_{j}E_{i}=\delta_{ij}\frac{K_{i}-K_{i}^{-1}}{q_{i}-q_{i}^{-1}},
\end{equation}
where we have used the standard notation to denote standard generators in $U_{q}$.
We will see that by cocycle deformation, the relation (\ref{eq0.1}) is deformed from
\begin{equation}
\label{eq0.2}
E_{i}F_{j}-F_{j}E_{i}=0.
\end{equation}
It is known (see \cite[Section 3.2]{J}) that $U_{q}$ can be constructed as a certain quotient of the (generalized) quantum double of $U_{q}^{\leq 0}=U_{q}^{-}\otimes U_{q}^{0}$ and $U_{q}^{\geq 0}=U_{q}^{0}\otimes U_{q}^{+}$.
By the fact (see \cite{DT2}) that the quantum double construction is a special cocycle deformation, this last construction of $U_{q}$ is captured by our method, by which we are saved from checking the quantum Serre relations.
Indeed, our method does not concern so much individual defining relations, and can apply at the same time to various analogues and variations of $U_{q}$, and even to more general type of Hopf algebras.

Emphasizing a Hopf-Galois theoretic treatment, we discussed in \cite{M3} cocycle deformation of a certain class of pointed Hopf algebras, containing $U_{q}$, and proved that each of those Hopf algebras is a cocycle deformation of the naturally associated, simpler graded Hopf algebra.
This generalizes the preceding results by Didt \cite{Di}, and by Kassel and Schneider \cite{KS}.
In this paper we treat with a larger class of Hopf algebras including non-pointed ones, working in the context of pre-Nichols algebras; see below.

Throughout the paper we work over a fixed field $k$.
Section 1 is devoted to preliminaries from Hopf-Galois theory, including basics on cocycle deformation.
Recall here only that given a Hopf algebra $\cH$ and a $2$-cocycle $\sigma : \cH\otimes\cH\rightarrow k$, the {\em cocycle deformation} $\cH^{\sigma}$ of $\cH$ by $\sigma$ is the Hopf algebra which is constructed on the coalgebra $\cH$ with respect to the deformed product defined by (\ref{eq1.3}) below; see \cite{D}.

In Section 2, we define and discuss pre-Nichols algebras.
Given a Hopf algebra $H$ with bijective antipode, left Yetter-Drinfeld modules over $H$ form a braided tensor category ${}^{H}_{H}\YD$.
Given an object $V\in {}^{H}_{H}\YD$, we define a {\em pre-Nichols algebra} of $V$ to be a quotient $TV/I$ of the tensor algebra $TV$, which is naturally regarded as a graded braided Hopf algebra in ${}^{H}_{H}\YD$, by some homogeneous braided Hopf ideal $I$ such that $I\cap V=0$; see Definition \ref{2.1}.
The {\em Nichols algebra} of $V$, defined by Andruskiewitsch and Schneider [9,10], is precisely the pre-Nichols algebra with the largest possible $I$.
Since a pre-Nichols (especially, Nichols) algebra is an inductive limit of larger pre-Nichols algebras (see Proposition \ref{2.2}), it seems more convenient to treat with pre-Nichols algebras rather than to treat with Nichols algebras only, especially when we argue by induction; see the proof of Proposition \ref{3.9}.

Section 3 contains our main theorem.
For each $1\leq i\leq m$, let $V_{i}\in{}^{H}_{H}\YD$, and choose a pre-Nichols algebra $R_{i}$ of $V_{i}$.
Suppose that if $i\neq j$, the braidings
\[ c_{ij} : V_{i}\otimes V_{j}\xrightarrow{\simeq}V_{j}\otimes V_{i},\quad c_{ji} : V_{j}\otimes V_{i}\xrightarrow{\simeq} V_{i}\otimes V_{j} \]
are inverses of each other.
Then the braided tensor product $R:=R_{1}\otimes\dotsb\otimes R_{m}$ forms a graded braided Hopf algebra (in fact, a pre-Nichols algebra of $\bigoplus_{i=1}^{m}V_{i}$) in ${}^{H}_{H}\YD$, so that by bosonization \cite{R}, we have an ordinary graded Hopf algebra
\[ \cH=R\dotrtimes H. \]
Suppose that an $H$-linear map
\[ \lambda : \bigoplus_{i>j}[V_{i},V_{j}]\rightarrow k \]
is given, where $[V_{i},V_{j}]$ denotes the image $(\id-c_{ij})(V_{i}\otimes V_{j})$ of the braided commutator.
Associated to $\lambda$, a Hopf algebra $\cH^{\lambda}$ is defined; see Definition \ref{3.6}.
Our main theorem, Theorem \ref{3.10}, states especially that the Hopf algebra $\cH^{\lambda}$ is a cocycle deformation of $\cH$.
The relations in $\cH$
\begin{center}
the braided commutator $[v,w]=0$,
\end{center}
where $v\in V_{i}$, $w\in V_{j}$ with $i>j$, are deformed to the relations in $\cH^{\lambda}$
\[ [v,w]=\lambda[v,w]-v_{-1}w_{-1}\lambda[v_{0},w_{0}], \]
where $v\mapsto v_{-1}\otimes v_{0}$, $V_{i}\rightarrow H\otimes V_{i}$ denotes the $H$-comodule structure, just as (\ref{eq0.2}) is deformed to (\ref{eq0.1}) in $U_{q}$.
In Section 5, we reduce our results obtained in the preceding Sections 3,4 in the special situation when $H$ is the group Hopf algebra $k\Gamma$ of an abelian group $\Gamma$, and $V_{i}$ are of diagonal type (in the sense of [9,10]).
In particular, $U_{q}$ (and its analogues as well) are in that situation, and are presented as $\cH^{\lambda}$, when $H=U_{q}^{0}$, $m=2$, $R_{1}=$ the subalgebra of $U_{q}$ generated by $F_{i}K_{i}$, $R_{2}=U_{q}^{+}$ and $\lambda[E_{i},F_{j}K_{j}]=\delta_{ij}/(q_{i}-q_{i}^{-1})$.
The finite-dimensional pointed Hopf algebras $u(\cD,\lambda,\mu)$ defined by Andruskiewitsch and Schneider [9,10], which generalize Lustzig's small quantum groups \cite{L}, sit in the same situation, but $m$ should be a natural number in general.
In Corollary \ref{3.11} (to our main theorem), we prove by fully using a Hopf-Galois theoretic argument that the graded Hopf algebra $\gr\cH^{\lambda}$ arising from a natural filtration on $\cH^{\lambda}$ is isomorphic to $\cH$.

This paper is inspired by a couple of papers [13,14] by Radford and Schneider.
In Section 4 we apply our main theorem to prove \cite[Theorem 8.3]{RS1} on the quantum double of two bosonized Hopf algebras, in a slightly reformulated form; see Theorem 4.3.
The result is used to generalize \cite[Theorems 4.4, 4.11]{RS2} (see Theorem 5.3); these theorems of \cite{RS2} in turn generalize the quantum double construction of $U_{q}$, cited in the first paragraph above. \vspace{7pt}

\noindent
\textbf{Notation.} We work over a fixed field $k$ of arbitrary characteristic.
The unadorned $\otimes$ denotes the tensor product over $k$.
Suppose that $H$ is a Hopf algebra.
The coproduct and the counit are denoted by $\Delta : H\rightarrow H\otimes H$, $\varepsilon : H\rightarrow k$, respectively.
The antipode is denoted by $\cS : H\rightarrow H$ in script.
We choose the sigma notation such as
\[ \Delta(h)=h_{1}\otimes h_{2}\quad (h\in H) \]
for the coproduct, and such as
\[ \begin{array}{rl}
v\mapsto v_{-1}\otimes v_{0}, & V\rightarrow H\otimes V \vspace{5pt} \\
(\mbox{resp.,}\ \ v\mapsto v_{0}\otimes v_{1}, & V\rightarrow V\otimes H)
\end{array} \]
for the structure of a left (resp., right) $H$-comodule $V$.
For any left (resp., right) $H$-module $W$, the $H$-action is denoted by
\[ h\rhu w\quad (\mbox{resp.,}\ \ w\lhu h), \]
where $h\in H$, $w\in W$.
By gradings or filtrations, we always mean those indexed by the non-negative integers $\{0,1,2,\dotsc\}$.

\section{Preliminaries on Galois and cleft objects}

Let $H$ be a Hopf algebra.
A right $H$-comodule algebra $A\neq 0$ is called a {\em right $H$-Galois object} if the map
\[ A\otimes A\rightarrow A\otimes H,\quad a\otimes b\mapsto ab_{0}\otimes b_{1} \]
is bijective.
In this case the subalgebra of $H$-coinvariants
\[ A^{\co H}:=\{a\in A\;|\;a_{0}\otimes a_{1}=a\otimes 1\} \]
in $A$ necessarily coincides with $k$.
Let $L$ be another Hopf algebra.
A {\em left $L$-Galois object} is defined analogously for a left $L$-comodule algebra.
An {\em $(L,H)$-biGalois object} is an $(L,H)$-bicomodule algebra which is Galois on both sides.
As was proved by Schauenburg \cite{Sb}, given a right $H$-Galois object $A$,
there exist uniquely (up to isomorphism) a Hopf algebra $L$ together with a coaction $A\rightarrow L\otimes A$ which makes $A$ into an $(L,H)$-biGalois object (though we will not use this result).

To give a natural way of constructing biGalois object, let $G$ be a Hopf algebra, and let $\gamma : G\rightarrow k$ be an algebra map.
The inverse of $\gamma$ in the group (under the convolution product) of all algebra maps $G\rightarrow k$ is given by $\gamma^{-1}=\gamma\circ\cS$.
We have an algebra automorphism of $G$,
\[ x\mapsto\ x\lhu\gamma:=\gamma(x_{1})x_{2}, \]
and a Hopf algebra automorphism of $G$,
\[ x\mapsto\ \gamma^{-1}\rhu x\lhu\gamma:=\gamma(x_{1})x_{2}\gamma^{-1}(x_{3}). \]
Let $\ga\subset G$ be a Hopf ideal.
Then we have an ideal $\ga\lhu\gamma$, and a Hopf ideal $\gamma^{-1}\rhu\ga\lhu\gamma$ in $G$.
Let $F$ be a Hopf algebra given a Hopf algebra map $\iota : G\rightarrow F$.
Let
\[ I_{L}=F(\gamma^{-1}\rhu\ga\lhu\gamma)F,\quad I=F(\ga\lhu\gamma)F,\quad I_{R}=F\ga F \]
denote the ideals of $F$ which are generated by the images of $\gamma^{-1}\rhu\ga\lhu\gamma$, $\ga\lhu\gamma$, $\ga$, respectively.
Set
\begin{equation}
\label{eq1.1}
L=F/I_{L},\quad A=F/I,\quad H=F/I_{R}.
\end{equation}

\begin{proposition}
\label{1.1}
(1) $I_{L}$ and $I_{R}$ are Hopf ideals of $F$, whence $L$ and $H$ are Hopf algebras.

(2) The coproduct of $F$ induces algebra maps
\[ L\otimes A\leftarrow A\rightarrow A\otimes H, \]
by which $A$ is an $(L,H)$-bicomodule algebra.

(3) If $I\neq F$ (or $A\neq 0$), $A$ is an $(L,H)$-biGalois object.
\end{proposition}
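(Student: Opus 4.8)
The plan is to treat the three parts in order, reducing everything to the behaviour of the Hopf ideal $\ga\subseteq G$ under the winding maps $\lambda_\gamma:=(\,\cdot\lhu\gamma):x\mapsto\gamma(x_1)x_2$ and $\phi:=(\gamma^{-1}\rhu\,\cdot\lhu\gamma)$, transported to $F$ through the Hopf map $\iota$.

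For (1), I would first note that $\phi$ is a Hopf algebra automorphism of $G$, so it carries the Hopf ideal $\ga$ to the Hopf ideal $\gamma^{-1}\rhu\ga\lhu\gamma=\phi(\ga)$. Then I would invoke the elementary fact that if $u:G\to F$ is a Hopf algebra map and $\mathfrak{b}\subseteq G$ is a Hopf ideal, the two-sided ideal $Fu(\mathfrak{b})F$ is a Hopf ideal of $F$: the generating subspace $u(\mathfrak b)$ lies in $\Ker\varepsilon$ and satisfies $\cS u(\mathfrak b)=u\cS(\mathfrak b)\subseteq u(\mathfrak b)$, and multiplying the inclusion $\Delta u(\mathfrak b)\subseteq u(\mathfrak b)\otimes F+F\otimes u(\mathfrak b)$ on both sides by $\Delta(F)$ shows that the generated ideal is again a coideal stable under $\cS$. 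Applying this with $\mathfrak b=\phi(\ga)$ and with $\mathfrak b=\ga$ makes $I_L$ and $I_R$ Hopf ideals, so $L$ and $H$ are Hopf algebras.

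For (2), the point is to produce the two comultiplication containments $\Delta_F(I)\subseteq I\otimes F+F\otimes I_R$ and $\Delta_F(I)\subseteq I_L\otimes F+F\otimes I$, which say exactly that $(\pi\otimes p)\Delta_F$ and $(\ell\otimes\pi)\Delta_F$ (where $\pi,p,\ell$ are the quotient maps onto $A,H,L$) descend along $\pi$ to $A$. I would establish the two winding identities
\[ \Delta_G\circ\lambda_\gamma=(\lambda_\gamma\otimes\id)\circ\Delta_G=(\phi\otimes\lambda_\gamma)\circ\Delta_G, \]
the first by coassociativity and the second by inserting $\gamma^{-1}*\gamma=\varepsilon$ on the middle legs. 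Since $\ga\lhu\gamma=\lambda_\gamma(\ga)$, feeding the coideal inclusion $\Delta_G(\ga)\subseteq\ga\otimes G+G\otimes\ga$ through these identities yields $\Delta_G(\ga\lhu\gamma)\subseteq(\ga\lhu\gamma)\otimes G+G\otimes\ga$ and $\Delta_G(\ga\lhu\gamma)\subseteq\phi(\ga)\otimes G+G\otimes(\ga\lhu\gamma)$. Pushing these forward by $\iota\otimes\iota$ and enlarging to the generated ideals gives the required containments; the bicomodule algebra axioms then follow formally from coassociativity and counitality of $\Delta_F$.

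Part (3) is where I expect the real work to lie. The right $H$-Galois map $\beta_A:A\otimes A\to A\otimes H$, $\pi(a)\otimes\pi(b)\mapsto\pi(ab_1)\otimes p(b_2)$, is the descent of the canonical map $\beta_F:F\otimes F\to F\otimes F$, $a\otimes b\mapsto ab_1\otimes b_2$, which is bijective with inverse $a\otimes b\mapsto a\cS(b_1)\otimes b_2$. Using the containment from (2) one checks that $(\pi\otimes p)\beta_F$ kills $I\otimes F+F\otimes I$, so $\beta_A$ is well defined, and since $\pi\otimes p$ and $\beta_F$ are surjective, $\beta_A$ is surjective; here the hypothesis $A\neq 0$ enters. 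The main obstacle is injectivity, equivalently the construction of a translation map $H\to A\otimes A$ inverting $\beta_A$: the naive formula $b\mapsto\pi(\cS(b_1))\otimes\pi(b_2)$ does not descend, because $\ga$ need not be $\lambda_\gamma$-stable, so that $\cS\iota(\ga)\subseteq I_R$ but not $\subseteq I$. I would resolve this by correcting the antipode with a $\gamma$-winding, exhibiting a $\gamma$-twisted translation map whose well-definedness rests precisely on $\gamma^{-1}*\gamma=\varepsilon$, the identity relating the coideals $\ga$ and $\ga\lhu\gamma$ that generate $I_R$ and $I$; equivalently, one shows that $A$ is an $H$-cleft object, cleftness then forcing the Galois property. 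The left $L$-Galois property is obtained by the symmetric argument with $\phi$ in place of $\lambda_\gamma$, and together these make $A$ an $(L,H)$-biGalois object.
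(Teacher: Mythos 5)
Your treatment of (1) and (2) is correct and is essentially what the paper leaves to the reader as ``easy to see''; in particular the two coideal containments $\Delta_F(I)\subseteq I\otimes F+F\otimes I_R$ and $\Delta_F(I)\subseteq I_L\otimes F+F\otimes I$, derived from the winding identities, are exactly the right intermediate statements.

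The gap is in (3), precisely where you expect ``the real work to lie.'' You assert that the canonical candidate for the inverse, induced by $a\otimes b\mapsto a\cS(b_1)\otimes b_2$, does not descend to a map $A\otimes H\rightarrow A\otimes A$, on the grounds that $\cS\iota(\ga)\subseteq I_R$ but $\not\subseteq I$. That inference is a non sequitur: what must land in $I\otimes F+F\otimes I$ is the whole tensor $\cS(y_1)\otimes y_2$, not the single leg $\cS(y)$, and it does. Indeed, for $y\in\ga$ write $\Delta(y)=\sum y'\otimes y''\in\ga\otimes G+G\otimes\ga$; inserting $\gamma^{-1}(y_2)\gamma(y_3)=\varepsilon(y_2)$ and using $\gamma^{-1}=\gamma\circ\cS$ together with $\cS(y'_1)\,\gamma(\cS(y'_2))=\cS(y')\lhu\gamma$, one finds
\[ \cS(y_{1})\otimes y_{2}=\cS(y_{1})\gamma^{-1}(y_{2})\otimes\gamma(y_{3})y_{4}=\sum\bigl(\cS(y')\lhu\gamma\bigr)\otimes\bigl(y''\lhu\gamma\bigr), \]
which lies in $(\ga\lhu\gamma)\otimes G+G\otimes(\ga\lhu\gamma)$ because $\cS(\ga)\subseteq\ga$; applying $\iota\otimes\iota$ and absorbing the extra factors of $F$ (which only multiply into the two-sided ideal $I$) gives $\beta_F^{-1}(F\otimes I_R)\subseteq I\otimes F+F\otimes I$. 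Combined with the forward containment from your part (2), this shows that $a\otimes b\mapsto ab_1\otimes b_2$ carries $I\otimes F+F\otimes I$ onto $I\otimes F+F\otimes I_R$, which is exactly the paper's proof (quoting [M2, Theorem 2]): both the canonical map and its inverse descend. So the $\gamma$-winding enters only in the \emph{verification}, not in the formula; a genuinely $\gamma$-twisted translation map, as you propose, could not work, since the translation map of a Galois object is unique (it must satisfy $\kappa^1(h)\kappa^2(h)_0\otimes\kappa^2(h)_1=1\otimes h$, which pins it down to the untwisted formula). Likewise the fallback ``equivalently, one shows that $A$ is an $H$-cleft object'' is not available in this generality: cleftness is strictly stronger than the Galois property, requiring an $H$-colinear isomorphism $H\simeq A$, and nothing in the hypotheses of Proposition \ref{1.1} provides one (that is the extra content of Proposition \ref{3.9} in the paper's application). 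Your symmetric argument for the left $L$-Galois property, now using the Hopf ideal $\gamma^{-1}\rhu\ga\lhu\gamma$, is the right idea once the right-hand computation is repaired as above.
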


This was stated in \cite[Theorem 2]{M2} in the special situation that $\iota$ is an inclusion, but the assumption given in (3) above was missing.
The corrected statement was given in \cite[Theorem 3.4]{BDR}.

\begin{proof}
It is easy to see (1), (2).
For (3), we see as in the proof of \cite[Theorem 2]{M2} that the isomorphism
\[ F\otimes F\xrightarrow{\simeq}F\otimes F,\quad a\otimes b\mapsto ab_{1}\otimes b_{2}, \]
whose inverse is given by $a\otimes b\mapsto a\cS(b_{1})\otimes b_{2}$, induces an isomorphism $A\otimes A\xrightarrow{\simeq}A\otimes H$.
This proves that if $A\neq 0$, $A$ is right $H$-Galois.
To see that $A$ is then left $L$-Galois, use the isomorphism
\[ F\otimes F\xrightarrow{\simeq}F\otimes F,\quad a\otimes b\mapsto a_{1}\otimes a_{2}b, \]
whose inverse is given by $a\otimes b\mapsto a_{1}\otimes \cS(a_{2})b$.
\end{proof}

Let $H$ be a Hopf algebra.
A right {\em $H$-cleft object} is a right $H$-comodule algebra $A$ which admits such an $H$-colinear isomorphism $\phi : H\xrightarrow{\simeq}A$ that is invertible with respect to the convolution product.
Such a $\phi$ can be chosen so as to preserve unit;
in this case, $\phi$ is called a {\em section} \cite{DT1}.
A right $H$-cleft object is characterized as such a right $H$-Galois object that admits an $H$-colinear isomorphism $H\xrightarrow{\simeq}A$ \cite[Theorem 9]{DT1}.
Given another Hopf algebra $L$, a {\em left $L$-cleft object} and an {\em $(L,H)$-bicleft object} are defined analogously.
An invertible linear map $\sigma : H\otimes H\rightarrow k$ is called a {\em $2$-cocycle}, if
\[ \begin{array}{c}
\sigma(g_{1},h_{1})\sigma(g_{2}h_{2},l)=\sigma(h_{1},l_{1})\sigma(g,h_{2}l_{2}), \vspace{5pt} \\
\sigma(h,1)=\varepsilon(h)=\sigma(1,h)
\end{array} \]
for all $g,h,l\in H$.
Given a pair $(A,\phi)$ of a right $H$-cleft object $A$ together with a section $\phi : H\xrightarrow{\simeq}A$, the linear map $\sigma$ defined on $H\otimes H$ by
\begin{equation}
\label{eq1.2}
\sigma(g,h):=\phi(g_{1})\phi(h_{1})\phi^{-1}(g_{2}h_{2})\quad (g,h\in H)
\end{equation}
takes values in $k=A^{\co H}$, and $\sigma : H\otimes H\rightarrow k$ turns into a $2$-cocycle \cite[Theorem 11]{DT1}. \\

\noindent
{\rm\bf Proposition 1.2 [19, Proposition 1.4].}
{\it Every $2$-cocycle $\sigma$ arises in this way, uniquely (up to isomorphism) from a pair $(A,\phi)$.}
\addtocounter{theorem}{1} \\

In the situation above, let ${}_{\sigma}H$ denote the right $H$-comodule $H$ endowed with the original unit and the deformed product
\[ g\cdot h:=\sigma(g_{1},h_{1})g_{2}h_{2}\quad (g,h\in H). \]
Then, ${}_{\sigma}H$ is a right $H$-cleft object such that ${}_{\sigma}H\rightarrow A$, $h\mapsto\phi(h)$ is an $H$-colinear algebra isomorphism.
This ${}_{\sigma}H$ is called the {\em $H$-crossed product} over $k$ implemented by $\sigma$.

Given a $2$-cocycle $\sigma : H\otimes H\rightarrow k$, let $H^{\sigma}$ denote the coalgebra $H$ endowed with the original unit and the deformed product
\begin{equation}
\label{eq1.3}
g\cdot h:=\sigma(g_{1},h_{1})g_{2}h_{2}\sigma^{-1}(g_{3},h_{3})\quad (g,h\in H).
\end{equation}

\noindent
{\rm\bf Proposition 1.3 [3, Theorem 1.6].}
{\it $H^{\sigma}$ is a bialgebra, and is indeed a Hopf algebra with respect to the deformed antipode $\cS^{\sigma}$ as given in \cite[p.\ 1736, line $-4$]{D}.}
\addtocounter{theorem}{1}

\begin{definition}
\label{1.4}
We call $H^{\sigma}$ the {\em cocycle deformation} of $H$ by $\sigma$.
\end{definition}

We remark that $\sigma^{-1}$ is a $2$-cocycle for $H^{\sigma}$, such that $(H^{\sigma})^{\sigma^{-1}}=H$.
So, we can say that $H$ and $H^{\sigma}$ are cocycle deformations of each other.

Keep $\sigma$ denoting a $2$-cocycle.
The $(H,H)$-bicomodule structure on $H$ makes ${}_{\sigma}H$ into an $(H^{\sigma},H)$-bicleft object.

\begin{lemma}
\label{1.5}
Suppose that $\sigma$ arises from a pair $(A,\phi)$, and suppose that this last $A$ is also a left $L$-Galois object, where $L$ is another Hopf algebra.
Suppose that $\eta : H\rightarrow L$ is a coalgebra map which makes the diagram
\[ \begin{CD}
H @>{\Delta}>> H\otimes H \\
@V{\phi}VV @VV{\eta\otimes\phi}V \\
A @>>> L\otimes A
\end{CD} \]
commute, where the arrow on the bottom denotes the $L$-comodule structure on $A$.
Then, $\eta$ is necessarily bijective, and further turns into a Hopf algebra isomorphism $H^{\sigma}\xrightarrow{\simeq}L$, so that $L$ is the cocycle deformation of $H$ by $\sigma$.
\end{lemma}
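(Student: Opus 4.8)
The plan is to move everything onto the crossed product ${}_\sigma H$, which by the remark preceding the lemma is an $(H^\sigma,H)$-biGalois object whose left $H^\sigma$-coaction is the coproduct $\Delta\colon {}_\sigma H\to H^\sigma\otimes {}_\sigma H$. Since $\sigma$ arises from $(A,\phi)$, Proposition 1.2 supplies an $H$-colinear algebra isomorphism $\phi\colon {}_\sigma H\xrightarrow{\simeq}A$, along which I would transport the left $L$-comodule algebra structure of $A$ to a left $L$-coaction $\rho'=(\id\otimes\phi^{-1})\circ\rho_L\circ\phi$ on ${}_\sigma H$; because conjugating the canonical map of $A$ by the algebra isomorphism $\phi$ preserves bijectivity, ${}_\sigma H$ thereby becomes a left $L$-Galois object as well. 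The first step is to read off $\rho'$ from the commuting square: substituting $\rho_L\circ\phi=(\eta\otimes\phi)\circ\Delta$ gives $\rho'(h)=\eta(h_1)\otimes h_2=(\eta\otimes\id)\Delta(h)$. Thus I would have, on the single object ${}_\sigma H$, two left Galois structures tied together by $\eta$: one over $H^\sigma$ with coaction $\Delta$, and one over $L$ with coaction $(\eta\otimes\id)\circ\Delta$. Evaluating the square at $h=1$, with $\phi(1)=1_A$ and $\rho_L(1_A)=1_L\otimes 1_A$, also records $\eta(1)=1_L$.

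Bijectivity of $\eta$ I would obtain by comparing the two canonical maps. Writing $\mathrm{can}_{H^\sigma}$ and $\mathrm{can}_L$ for the maps $a\otimes b\mapsto a_{-1}\otimes a_0 b$ into $H^\sigma\otimes {}_\sigma H$ and $L\otimes {}_\sigma H$ respectively, the relation $\rho'=(\eta\otimes\id)\circ\Delta$ gives immediately $\mathrm{can}_L=(\eta\otimes\id_{{}_\sigma H})\circ\mathrm{can}_{H^\sigma}$. Both canonical maps are bijective (the biGalois property for $H^\sigma$, the transported Galois property for $L$), so $\eta\otimes\id_{{}_\sigma H}=\mathrm{can}_L\circ\mathrm{can}_{H^\sigma}^{-1}$ is bijective; as ${}_\sigma H\neq 0$ and we work over a field, this forces $\eta$ itself to be bijective. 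Note that this argument deliberately sidesteps Schauenburg's uniqueness theorem, which the paper has declared it will not use.

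It then remains to see that the coalgebra isomorphism $\eta$ is also an algebra map, whence a Hopf algebra isomorphism $H^\sigma\xrightarrow{\simeq}L$ (a bijective bialgebra map between Hopf algebras automatically commutes with the antipodes). For this I would use that $\rho'$, as a transported comodule-algebra structure, is an algebra map ${}_\sigma H\to L\otimes {}_\sigma H$. Expanding $\rho'(g\cdot h)=\rho'(g)\rho'(h)$ for the deformed product $g\cdot h=\sigma(g_1,h_1)g_2 h_2$ and applying $\id_L\otimes\varepsilon$ to the ${}_\sigma H$-leg yields $\sigma(g_1,h_1)\eta(g_2 h_2)=\sigma(g_2,h_2)\eta(g_1)\eta(h_1)$; convolving the outer legs with $\sigma^{-1}$ and using $\sigma*\sigma^{-1}=\varepsilon$ on $H\otimes H$ then collapses this to $\eta\bigl(\sigma(g_1,h_1)g_2 h_2\sigma^{-1}(g_3,h_3)\bigr)=\eta(g)\eta(h)$, i.e. $\eta$ respects the deformed product (\ref{eq1.3}) of $H^\sigma$. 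This last step is routine cocycle bookkeeping. Combining it with bijectivity finishes the proof that $\eta\colon H^\sigma\xrightarrow{\simeq}L$ is a Hopf algebra isomorphism, so that $L$ is the cocycle deformation of $H$ by $\sigma$.

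I expect the main obstacle to be conceptual rather than computational: correctly identifying the transported coaction as $(\eta\otimes\id)\circ\Delta$ and recognizing that the two left Galois structures on the one object ${}_\sigma H$ are linked through the single map $\eta$, so that conjugation of the canonical maps delivers bijectivity directly. Once that comparison is in hand, the verification that $\eta$ is multiplicative is a careful but unsurprising Sweedler-and-cocycle manipulation.
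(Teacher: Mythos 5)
Your proposal is correct and follows essentially the same route as the paper: the paper likewise compares the two canonical Galois maps in a commutative square linking ${}_{\sigma}H\otimes{}_{\sigma}H\to H^{\sigma}\otimes{}_{\sigma}H$ with $A\otimes A\to L\otimes A$ via $\phi\otimes\phi$ and $\eta\otimes\phi$ to get bijectivity of $\eta$, and then deduces multiplicativity from the fact that the (transported) coaction is an algebra map. Your explicit cocycle computation merely fills in the step the paper delegates to the citation of \cite[Lemma 1.5]{M3}.
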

\begin{proof}
The commutative diagram above gives rise to
\begin{center}
\begin{picture}(130,70)
\put(10,55){${}_{\sigma}H\otimes {}_{\sigma}H$}
\put(55,57){\vector(1,0){30}}
\put(65,60){$\simeq$}
\put(90,55){$H^{\sigma}\otimes {}_{\sigma}H$}
\put(30,50){\vector(0,-1){30}}
\put(3,35){$\phi\otimes\phi$}
\put(35,35){$\simeq$}
\put(17,8){$A\otimes A$}
\put(55,10){\vector(1,0){30}}
\put(65,13){$\simeq$}
\put(97,8){$L\otimes A$,}
\put(110,50){\vector(0,-1){30}}
\put(115,35){$\eta\otimes\phi$}
\put(65,30){\large $\circlearrowright$}
\end{picture}
\end{center}
in which the horizontal isomorphisms are given by $g\otimes h\mapsto g_{1}\otimes g_{2}\cdot h$, $a\otimes b\mapsto a_{-1}\otimes a_{0}b$, respectively.
It results that $\eta$ is bijective.
We see as in the proof of \cite[Lemma 1.5]{M3} that $\eta : H^{\sigma}\xrightarrow{\simeq} L$ is a Hopf algebra isomorphism, since in order for $\Delta : {}_{\sigma}H\rightarrow H\otimes {}_{\sigma}H$ to be an algebra map, the algebra structure on the coalgebra $H$ must coincide with that structure on $H^{\sigma}$.
\end{proof}

Let $J,K$ be Hopf algebras.
We say in general that a Hopf algebra $G$ including $J,K$ as Hopf subalgebras {\em factorizes into} $J,K$ if the product map $J\otimes K\rightarrow G$ is an isomorphism.
In this case, $G$ is the tensor product $J\otimes K$ as a coalgebra, and the algebra structure is determined if we present the products $x\cdot a$, where $x\in K$, $a\in J$, as elements in $J\otimes K$;
we will call such presentations {\em product rules}.
It is enough to take $x,a$ from respective systems of generators.

A linear map $\tau : J\otimes K\rightarrow k$ is called a {\em skew pairing} \cite[Definition 1.3]{DT2}, if
\[ \begin{array}{l}
\tau(ab,x)=\tau(a,x_{1})\tau(b,x_{2}), \vspace{5pt} \\
\tau(a,xy)=\tau(a_{1},y)\tau(a_{2},x), \vspace{5pt} \\
\tau(1,x)=\varepsilon(x),\quad \tau(a,1)=\varepsilon(a)
\end{array} \]
for all $a,b\in J$, $x,y\in K$.
Such a $\tau$ is necessarily invertible, and the inverse is given by
\[ \tau^{-1}(a,x)=\tau(\cS(a),x)\quad (a\in J,\ x\in K), \]
which further equals $\tau(a,\cS^{-1}(x))$ if the antipode of $K$ is bijective \cite[Lemma 1.4]{DT2}.
Define $H=J\otimes K$, the tensor-product Hopf algebra.
If $\tau : J\otimes K\rightarrow k$ is a skew pairing, then the linear map $\sigma : H\otimes H\rightarrow k$ defined by 
\begin{equation}
\label{eq1.4}
\sigma(a\otimes x,b\otimes y)=\varepsilon(a)\tau(b,x)\varepsilon(y)\quad (a,b\in J,\ x,y\in K)
\end{equation}
is a $2$-cocycle \cite[Proposition 1.5]{DT2}.
We see easily the following.

\begin{proposition}
\label{1.6}
Let $H=J\otimes K$ be as above.
Let $\sigma : H\otimes H\rightarrow k$ be a $2$-cocycle, and suppose that it arises from a pair $(A,\phi)$.
Then, $\sigma$ arises as above from some skew pairing $\tau : J\otimes K\rightarrow k$ if and only if $\phi$ is an algebra map restricted to $J=J\otimes k$, and to $K=k\otimes K$.
In this case the cocycle deformation $H^{\sigma}$ ($=J\otimes K$) includes $J$ ($=J\otimes k$) and $K$ ($=k\otimes K$) as Hopf subalgebras. Moreover, $H^{\sigma}$ factorizes into $J,K$, and obeys the product rule
\[ x\cdot a=\tau(a_{1},x_{1})a_{2}\otimes x_{2}\tau(\cS(a_{3}),x_{3})\quad (a\in J,\ x\in K). \]
\end{proposition}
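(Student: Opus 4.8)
The plan is to carry out everything through the crossed product ${}_{\sigma}H$ and the algebra isomorphism $\phi : {}_{\sigma}H\xrightarrow{\simeq}A$ recorded just after Proposition 1.2; this gives the basic relation $\phi(g)\phi(h)=\sigma(g_{1},h_{1})\phi(g_{2}h_{2})$. Since the crossed-product product satisfies $(a\otimes1)*(b\otimes1)=\sigma(a_{1}\otimes1,b_{1}\otimes1)\,a_{2}b_{2}\otimes1$, and symmetrically for $K$, the restriction $\phi|_{J}$ (resp. $\phi|_{K}$) is an algebra map if and only if $\sigma$ restricts to $\varepsilon\otimes\varepsilon$ on $J\otimes J$ (resp. on $K\otimes K$). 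I would first record this reduction, so that the asserted equivalence becomes a statement purely about $\sigma$: that $\sigma$ has the form (\ref{eq1.4}) for some skew pairing if and only if $\sigma|_{J\otimes J}=\varepsilon\otimes\varepsilon$ and $\sigma|_{K\otimes K}=\varepsilon\otimes\varepsilon$.

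For the forward implication I would simply read off from (\ref{eq1.4}), using the normalizations $\tau(b,1)=\varepsilon(b)$ and $\tau(1,x)=\varepsilon(x)$, that $\sigma(a\otimes1,b\otimes1)=\varepsilon(a)\varepsilon(b)$ and $\sigma(1\otimes x,1\otimes y)=\varepsilon(x)\varepsilon(y)$; both restrictions are then trivial, whence $\phi|_{J}$ and $\phi|_{K}$ are algebra maps. For the converse I would set $\tau(b,x):=\sigma(1\otimes x,\,b\otimes1)$ and aim to show both that $\tau$ is a skew pairing and that $\sigma$ coincides with the cocycle $\sigma_{\tau}$ produced from $\tau$ by (\ref{eq1.4}). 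The mechanism is the same in each case: expand $a\otimes x=(a\otimes1)(1\otimes x)$ and $b\otimes y=(b\otimes1)(1\otimes y)$, feed the pieces into the $2$-cocycle identity, and use $\sigma|_{J\otimes J}=\sigma|_{K\otimes K}=\varepsilon\otimes\varepsilon$ to collapse the pure terms, so that the only surviving factors are values of $\tau$. The skew-pairing axioms should drop out of the instances of the cocycle identity whose three arguments are taken one from $K$ and two from $J$ (giving $\tau(bb',x)=\tau(b,x_{1})\tau(b',x_{2})$) and two from $K$ and one from $J$ (giving $\tau(b,xy)=\tau(b_{1},y)\tau(b_{2},x)$), while $\sigma=\sigma_{\tau}$ should come from peeling the left $J$-factor and the right $K$-factor off a general argument.

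The step I expect to be the main obstacle is exactly this peeling in the converse: in reducing a general value $\sigma(a\otimes x,\,b\otimes y)$ to $\varepsilon(a)\tau(b,x)\varepsilon(y)$ one is forced to control the opposite mixed value $\sigma(a\otimes1,\,1\otimes x)$, which the cocycle identity entangles with the $\tau$-values rather than trivializing outright; closing the argument will require combining several instances of the identity with the fact that $J$ and $K$ commute inside $H$, so that $(a\otimes1)(1\otimes x)=(1\otimes x)(a\otimes1)$. Once the skew-pairing form of $\sigma$ is in hand, the remaining assertions are routine calculations with the deformed product (\ref{eq1.3}). Triviality of $\sigma$ and of $\sigma^{-1}$ on $J\otimes J$ gives $(a\otimes1)\cdot(b\otimes1)=ab\otimes1$, exhibiting $J$ (and symmetrically $K$) as a Hopf subalgebra of $H^{\sigma}$; triviality of the opposite mixed term gives $(a\otimes1)\cdot(1\otimes x)=a\otimes x$, so the product map $J\otimes K\to H^{\sigma}$ is the identity on underlying spaces, hence bijective, which is the factorization. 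Finally, substituting $g=1\otimes x$, $h=a\otimes1$ into (\ref{eq1.3}) and invoking (\ref{eq1.4}) for $\sigma$ together with $\sigma^{-1}(1\otimes x,a\otimes1)=\tau^{-1}(a,x)=\tau(\cS(a),x)$ yields the product rule $x\cdot a=\tau(a_{1},x_{1})\,a_{2}\otimes x_{2}\,\tau(\cS(a_{3}),x_{3})$.
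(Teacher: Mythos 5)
Your reduction of the condition ``$\phi|_{J}$ and $\phi|_{K}$ are algebra maps'' to ``$\sigma|_{J\otimes J}=\varepsilon\otimes\varepsilon$ and $\sigma|_{K\otimes K}=\varepsilon\otimes\varepsilon$'' is correct, the forward implication is correct, and your final computations (Hopf subalgebras, factorization, product rule) are fine once the skew-pairing form of $\sigma$ is in hand. The gap is exactly at the step you flag as the main obstacle, and it cannot be closed: triviality of $\sigma$ on $J\otimes J$ and on $K\otimes K$ does \emph{not} force $\sigma(a\otimes 1,1\otimes y)=\varepsilon(a)\varepsilon(y)$, and hence does not force $\sigma$ to have the form (\ref{eq1.4}). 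Concretely, take $J=K=kC_{2}$ with $C_{2}=\langle g\rangle$, resp.\ $\langle h\rangle$, so $H=k[C_{2}\times C_{2}]$, choose $c\in k^{\times}$ with $c\neq 1$, and let $\sigma=\partial\mu$ be the coboundary of the normalized function $\mu$ with $\mu(1)=\mu(g)=\mu(h)=1$, $\mu(gh)=c$, i.e.\ $\sigma(u,v)=\mu(u)\mu(v)\mu(uv)^{-1}$ on grouplikes. This is a $2$-cocycle, it restricts to $\varepsilon\otimes\varepsilon$ on $J\otimes J$ and on $K\otimes K$ (so the section $\id:H\to{}_{\sigma}H$ is an algebra map on each factor), yet $\sigma(g\otimes 1,1\otimes h)=c^{-1}\neq 1$, whereas any $\sigma$ of the form (\ref{eq1.4}) must take the value $\tau(1,1)=1$ there. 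So no amount of juggling the cocycle identity with the commutativity of $J$ and $K$ will produce the peeling you need; the three instances of the cocycle identity you invoke are all satisfied by this $\sigma$.

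What is missing is the additional normalization $\sigma(a\otimes 1,1\otimes y)=\varepsilon(a)\varepsilon(y)$, equivalently that $\phi$ be multiplicative \emph{across} the ordered factorization, $\phi(a\otimes x)=\phi(a\otimes 1)\phi(1\otimes x)$, not merely on each tensor factor separately. With that extra hypothesis the peeling goes through and one recovers $\sigma(a\otimes x,b\otimes y)=\varepsilon(a)\tau(b,x)\varepsilon(y)$ with $\tau(b,x)=\sigma(1\otimes x,b\otimes 1)$; and this extra condition does hold wherever the proposition is used in the paper (in the proof of Theorem \ref{4.3} the section is the identity on a bicleft object whose product rules within each factor and for products taken in the order $J$-then-$K$ are undeformed). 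The paper offers no proof of Proposition \ref{1.6} (``We see easily the following''), so there is no argument of the author's to compare yours against; but as written, your converse direction rests on a false intermediate claim, and either your proof must import the cross-multiplicativity of $\phi$ as an additional input or the hypothesis of the statement must be read as including it.
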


\begin{remark}
\label{1.7}
As was pointed out by Doi and Takeuchi \cite[Remark 2.3]{DT2}, the thus constructed $H^{\sigma}$ coincides with the quantum double formulated by DeConcini and Lyubashenko \cite{DeL} (see also \cite[Section 3.2]{J}) which generalizes Drinfeld's double \cite{Dr}.
\end{remark}

\section{Pre-Nichols algebras}

Let $H$ be a Hopf algebra with bijective antipode.
Let ${}^{H}_{H}\YD$ denote the braided tensor category of left Yetter-Drinfeld modules over $H$ \cite[p.\ 213]{Mo}.
Let $V\in {}^{H}_{H}\YD$.
Thus, $V$ is a left $H$-module given a left $H$-comodule structure $\rho : V\rightarrow H\otimes V$, $\rho(v)=v_{-1}\otimes v_{0}$ such that
\[ \rho(h\rhu v)=h_{1}v_{-1}\cS(h_{3})\otimes(h_{2}\rhu v_{0})\quad (h\in H,\ v\in V). \]
Given another object $W\in {}^{H}_{H}\YD$, the braiding is given by
\[ c=c_{V,W} : V\otimes W\xrightarrow{\simeq}W\otimes V,\quad c(v\otimes w)=(v_{-1}\rhu w)\otimes v_{0}. \]
We say that $V$ and $W$ are {\em symmetric} if $c_{W,V}\circ c_{V,W}=\id_{V\otimes W}$, the identity map, or equivalently if $c_{V,W}\circ c_{W,V}=\id_{W\otimes V}$.

\begin{definition}
\label{2.1}
A {\em pre-Nichols algebra} of $V$ is a graded braided bialgebra $R=\bigoplus_{n=0}^{\infty} R(n)$ in ${}^{H}_{H}\YD$ such that
\begin{enumerate}
\renewcommand{\labelenumi}{(\roman{enumi})}
\item $R(0)=k$, the unit object in ${}^{H}_{H}\YD$,
\item $R(1)=V$ as objects in ${}^{H}_{H}\YD$, and
\item $R$ is generated by $R(1)$.
\end{enumerate}
\end{definition}

Let $R$ be such as defined above.
By (i), the coradical of $R$ is $k$ ($=R(0)$), so that $R$ has a (necessarily graded) antipode, or in other words, $R$ is a graded braided Hopf algebra.
One also sees that $V$ ($=B(1)$) consists of primitives, or namely
\[ V\subset P(R):=\{v\in R\;|\;\Delta(v)=v\otimes 1+1\otimes v\}. \]
$R$ is called the {\em Nichols algebra} [9,10] of $V$ if in addition,
\begin{enumerate}
\renewcommand{\labelenumi}{(\roman{enumi})}
\addtocounter{enumi}{3}
\item $V=P(R)$.
\end{enumerate}

The tensor algebra $TV$ of $V$ is naturally a pre-Nichols algebra of $V$.
The algebra map $\varphi : TV\rightarrow R$ induced from the inclusion $V\hookrightarrow R$ is a graded braided Hopf algebra epimorphism.
We call the kernel $\Ker\varphi$ is the {\em defining ideal} of $R$, or we say that $R$ is {\em defined by} the ideal.
The ideal is a homogeneous braided bi-ideal (necessarily, Hopf ideal) which trivially intersects with $V$.
Conversely, if $I\subset TV$ is such a bi-ideal, then $TV/I$ is a pre-Nichols algebra defined by $I$.
The Nichols algebra of $V$ is precisely the pre-Nichols algebra of $V$ with the largest defining ideal; see \cite[Proposition 2.2]{AS1}.

Let $R$ be a pre-Nichols algebra of $V$ with defining ideal $I$.
Set $I_{0}=0$, and define inductively an ascending chain
\begin{equation}
\label{eq2.1}
0=I_{0}\subset I_{1}\subset I_{2}\subset\dotsb\ (\subset I)
\end{equation}
of homogeneous braided bi-ideals of $TV$ included in $I$, so that $I_{n+1}/I_{n}$ is the ideal in $TV/I_{n}$ generated by all primitives included in $I/I_{n}$.
Notice that those primitives form a graded subobject of $I/I_{n}$ in ${}^{H}_{H}\YD$.
Then one sees that (\ref{eq2.1}) consists of homogeneous braided bi-ideals.

\begin{proposition}
\label{2.2}
We have $I=\bigcup_{n=0}^{\infty}I_{n}$.
It follows that
\[ R=TV/I=\varinjlim TV/I_{n}, \]
the inductive limit along the projections $TV/I_{n}\rightarrow TV/I_{n+1}$.
\end{proposition}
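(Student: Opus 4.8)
The plan is to set $J:=\bigcup_{n=0}^{\infty}I_{n}$, which as the union of an ascending chain of homogeneous braided bi-ideals is again a homogeneous braided bi-ideal of $TV$ contained in $I$, and to prove the nontrivial inclusion $I\subseteq J$ by showing that the image $I/J$ vanishes in the connected graded braided bialgebra $\bar R:=TV/J$. Since $I$ is proper with $I\cap V=0$, we have $(I/J)(0)=(I/J)(1)=0$, so if $I/J\neq 0$ I may choose the least degree $d\geq 2$ with $(I/J)(d)\neq 0$ and pick a nonzero homogeneous element $\bar x\in(I/J)(d)$.

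First I would exploit that $I/J$ is a coideal of $\bar R$, so that $\Delta(\bar x)\in (I/J)\otimes\bar R+\bar R\otimes(I/J)$. Decomposing by bidegree and using $\bar R(0)=k$ together with the minimality of $d$, which forces $(I/J)(i)=0$ for $0<i<d$, every intermediate-degree component of $\Delta(\bar x)-\bar x\otimes 1-1\otimes\bar x$ is killed. Hence $\bar x$ is a nonzero primitive element of $\bar R$ lying in $I/J$.

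The key step, and the place where care is needed, is to lift this primitivity to a finite stage of the chain. Represent $\bar x$ by a homogeneous $x\in I$ of degree $d$. Primitivity of $\bar x$ means the element $\Delta(x)-x\otimes 1-1\otimes x$ lies in $J\otimes TV+TV\otimes J=\bigcup_{N}(I_{N}\otimes TV+TV\otimes I_{N})$. Because this expression is a single element, namely a finite sum of tensors, it already lies in $I_{N}\otimes TV+TV\otimes I_{N}$ for some finite $N$; this finitary observation is exactly what lets the argument go through even though the graded pieces of $TV$ may be infinite-dimensional, so that one cannot simply appeal to stabilization of the chain in each degree. Thus $x+I_{N}$ is a primitive of $TV/I_{N}$ lying in $I/I_{N}$, and by the very definition of the chain it belongs to $I_{N+1}/I_{N}$, giving $x\in I_{N+1}\subseteq J$. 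This contradicts $\bar x\neq 0$, so $I/J=0$ and $I=J$.

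Finally, the displayed identity $R=TV/I=\varinjlim TV/I_{n}$ is then formal: for an ascending chain of subobjects the quotient by the union is the colimit of the successive quotients along the canonical projections $TV/I_{n}\rightarrow TV/I_{n+1}$, a colimit that is computed in ${}^{H}_{H}\YD$ degreewise.
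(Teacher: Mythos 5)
Your proof is correct and follows essentially the same route as the paper: the paper reduces injectivity of $TV/J\rightarrow TV/I$ to injectivity on primitives and then uses exactly your finitary observation that an element primitive modulo $J=\bigcup_n I_n$ is already primitive modulo some $I_n$, hence lies in $I_{n+1}\subset J$. Your minimal-degree argument merely unpacks the standard ``injective on primitives $\Rightarrow$ injective'' step that the paper invokes implicitly.
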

\begin{proof}
Set $J=\bigcup_{n=0}^{\infty}I_{n}$.
Then, $J\subset I$.
We wish to show that the canonical $TV/J\rightarrow R$ is injective.
It suffices to prove that the map is injective, restricted to the primitives.
Suppose that an element $u\in TV$ which is a primitive modulo $J$ is contained in $I$.
Then, $u$ is a primitive modulo some $I_{n}$, which implies $u\in I_{n+1}\subset J$.
This proves the desired injectivity.
\end{proof}

\begin{definition}
\label{2.3}
Given $V,W\in{}^{H}_{H}\YD$, let $[V,W]$ denote the image of the braided commutator
\[ \id-c_{V,W} : V\otimes W\rightarrow (V\otimes W)\oplus (W\otimes V). \]
Since this last is a morphism in ${}^{H}_{H}\YD$, $[V,W]$ is a subobject of $T^{2}(V\oplus W)$ in ${}^{H}_{H}\YD$.
For $v\in V$, $w\in W$, we write $[v,w]$ for $(\id-c)(v\otimes w)$, or explicitly
\[ [v,w]=v\otimes w-(v_{-1}\rhu w)\otimes v_{0}. \]
\end{definition}

Let $V,W\in{}^{H}_{H}\YD$.
Let $R,S$ be pre-Nichols algebras of $V,W$ with defining ideals $I,J$, respectively.
Then the braided tensor product $R\otimes S$ is naturally a graded algebra and coalgebra in ${}^{H}_{H}\YD$.

\begin{proposition}
\label{2.4}
The following are equivalent:
\begin{enumerate}
\renewcommand{\labelenumi}{(\roman{enumi})}
\item $V$ and $W$ are symmetric (see just above Definition \ref{2.1});
\item $[V,W]=[W,V]$;
\item $[V,W]$ consists of primitives in $T(V\oplus W)$;
\item $R\otimes S$ is a braided bialgebra.
\end{enumerate}
If these conditions hold, $R\otimes S$ is a pre-Nichols algebra of $V\oplus W$ defined by the ideal generated by $I,J$ and $[V,W]$.
\end{proposition}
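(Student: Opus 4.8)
The plan is to reduce all four conditions to a single identity comparing the two cross-braidings, and then to settle the final assertion by a braided straightening (PBW-type) argument. The pivotal computation I would carry out first is the reduced coproduct of a braided commutator inside the tensor algebra. For $v\in V$, $w\in W$, I regard $[v,w]=v\otimes w-(v_{-1}\rhu w)\otimes v_{0}$ as the degree-$2$ element $vw-c_{V,W}(v\otimes w)$ of $T(V\oplus W)$ and expand $\Delta$ on the four relevant degree-$(1,1)$ terms, using the braided tensor-product product $(a\otimes b)(c\otimes d)=a(b_{-1}\rhu c)\otimes b_{0}d$. This yields
\[ \bar\Delta[v,w]=(\id-c_{W,V}\circ c_{V,W})(v\otimes w), \]
where $\bar\Delta$ denotes the part of $\Delta$ landing in positive degrees on both tensor factors. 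This one formula does most of the work.

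From here the equivalences are short. Since $\bar\Delta[v,w]$ lies in $V\otimes W$, it vanishes for all $v,w$ precisely when $c_{W,V}\circ c_{V,W}=\id_{V\otimes W}$; because a homogeneous element of a graded connected braided Hopf algebra is primitive exactly when its reduced coproduct vanishes, this gives (i)$\Leftrightarrow$(iii). For (i)$\Leftrightarrow$(ii) I would observe that, inside $(V\otimes W)\oplus(W\otimes V)$, the subobject $[V,W]$ is the graph of $-c_{V,W}:V\otimes W\to W\otimes V$ while $[W,V]$ is the graph of $-c_{W,V}:W\otimes V\to V\otimes W$; equating these two graphs forces $\xi=c_{W,V}c_{V,W}\xi$ for all $\xi\in V\otimes W$, which is again (i), and conversely. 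For (i)$\Leftrightarrow$(iv), since $R\otimes S$ is generated as an algebra by $V=V\otimes 1$ and $W=1\otimes W$, it suffices to test $\Delta(ab)=\Delta(a)\Delta(b)$ on pairs of generators. Pairs inside $R\otimes 1\cong R$ or inside $1\otimes S\cong S$ are automatic; a direct expansion shows the axiom holds unconditionally when $a\in V$, $b\in W$; and the only surviving case $a\in W$, $b\in V$ leaves exactly the discrepancy $(\id-c_{V,W}\circ c_{W,V})$ applied to $w\otimes v$, so it holds there iff (i) holds.

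For the final structural statement I assume (i) and analyze the canonical graded braided Hopf surjection $\varphi:T(V\oplus W)\to R\otimes S$ induced by $V\oplus W\hookrightarrow R\otimes S$. Writing $\mathcal{K}$ for the ideal generated by $I$, $J$ and $[V,W]$, the inclusion $\mathcal{K}\subseteq\Ker\varphi$ is routine: $\varphi$ restricts to the defining surjections $TV\to R$ and $TW\to S$, killing $I$ and $J$, while $\varphi[v,w]=(v\otimes w)-c_{W,V}c_{V,W}(v\otimes w)=0$ by symmetry. For the reverse inclusion I would build the multiplication map $\psi:R\otimes S\to T(V\oplus W)/\mathcal{K}$, $\bar r\otimes\bar s\mapsto\overline{rs}$ (well defined since $I,J\subseteq\mathcal{K}$), verify $\bar\varphi\circ\psi=\id_{R\otimes S}$ by a one-line check, and then prove $\psi$ is onto. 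Granting surjectivity, $\psi$ and $\bar\varphi$ are mutually inverse, so $\Ker\varphi=\mathcal{K}$; the grading $(R\otimes S)(n)=\bigoplus_{p+q=n}R(p)\otimes S(q)$ then makes the remaining pre-Nichols conditions—$(R\otimes S)(0)=k$, $(R\otimes S)(1)=V\oplus W$, and generation in degree $1$—immediate.

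The step I expect to be the real obstacle is the surjectivity of $\psi$, i.e.\ the braided normal-form claim that every element reduces modulo $\mathcal{K}$ to a combination of $V$-before-$W$ products. The naive move of reordering an adjacent $wv$ into $vw$ can create fresh disorder with neighbouring letters, so a crude inversion count need not decrease. I would instead move a single $W$-letter rightward past an entire $V$-word by induction on the degree of that word, which terminates cleanly and shows $w\cdot(rs)$ lies in the image of $\psi$ for all $r\in TV$, $s\in TW$; since that image already contains $1$ and is stable under left multiplication by $V$, it is then stable under left multiplication by $W$ as well, hence is everything. All remaining steps reduce to bookkeeping with the braided multiplication above and the Yetter--Drinfeld compatibility for the coaction.
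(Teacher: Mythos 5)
Your proposal is correct and follows essentially the same route as the paper: the pivotal identity $\bar\Delta[v,w]=(\id-c_{W,V}c_{V,W})(v\otimes w)$ is exactly the paper's formula (\ref{eq2.2}), the four equivalences are derived from it in the same way (the paper invokes the mutual-inverse braidings as ``a general fact'' for (i)$\Rightarrow$(iv) where you check the axiom on generators, and its one-line remark $[v,w]=-[v_{-1}\rhu w,v_{0}]$ is your graph observation for (i)$\Leftrightarrow$(ii)), and your $\psi$ with $\bar\varphi\circ\psi=\id$ plus the straightening argument is precisely the paper's map $\nu$. The only point worth noting is that your caution about the inversion count is unnecessary --- replacing an adjacent $wv$ by $\sum v'w'$ leaves the type sequence elsewhere unchanged, so the number of misordered pairs does strictly decrease --- but your alternative induction is equally valid.
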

\begin{proof}
It is easy to (i) $\Leftrightarrow$ (ii). We only remark that under (i),
\[ [v,w]=-[v_{-1}\rhu w,v_{0}]\quad (v\in V,\ w\in W). \]
As is well-known, (i) $\Leftrightarrow$ (iii) follows from
\begin{equation}
\label{eq2.2}
\Delta[v,w]=[v,w]\otimes 1+1\otimes[v,w]+(\id-c^{2})(v\otimes w).
\end{equation}
If (i) holds, the braidings $R\otimes S\rightleftarrows S\otimes R$ are inverses of each other, which implies (iv) as a general fact.
If (iv) holds, then the algebra map $\varphi : T(V\oplus W)\rightarrow R\otimes S$ induced from the inclusion $V\oplus W=(R\otimes S)(1)\hookrightarrow R\otimes S$ is a coalgebra map.
Since $\varphi[w,v]=0$ by the product rule on $R\otimes S$, one sees from (\ref{eq2.2}) that $(\id-c^{2})(w\otimes v)=0$ in $S(1)\otimes R(1)$, whence (i) holds.

Suppose that the conditions hold.
Let $P\subset T(V\oplus W)$ denote the ideal generated by $I,J$ and $[V,W]$.
By (iii), this is a homogeneous braided bi-ideal.
Set $B=T(V\oplus W)/P$.
We see that the map $\varphi$ above induces a braided Hopf algebra map $\bar{\varphi} : B\rightarrow R\otimes S$.
The tensor product of the natural braided Hopf algebra maps $R\rightarrow B$, $S\rightarrow B$, composed with the product $B\otimes B\rightarrow B$, defines $\nu : R\otimes S\rightarrow B$.
Since $\bar{\varphi}\circ\nu=\id$, $\nu$ is injective.
It is also surjective, since $[V,W]\subset P$.
Thus, $\nu$ and hence $\bar{\varphi}$ are isomorphisms, which proves the last statement.
\end{proof}

\begin{remark}
\label{2.5}
Let $B$ be an arbitrary braided bialgebra in ${}^{H}_{H}\YD$.
As is seen from (\ref{eq2.2}), if $a,b$ are primitives in $B$ such that $c^{2}(a\otimes b)=a\otimes b$, then $[a,b]$ is a primitive.
Here we have used the slightly abused notation
\begin{equation}
\label{eq2.3}
[a,b]=ab-(a_{-1}\rhu b)a_{0}.
\end{equation}
\end{remark}

\section{Constructing Hopf algebras from pre-Nichols algebras}

\begin{notation}
\label{3.1}
Throughout in what follows, $H$ denotes a Hopf algebra with bijective antipode.
Let $\{V_{\alpha}\}_{\alpha\in\Lambda}$ be a family of objects in ${}^{H}_{H}\YD$ such that for all $\alpha\neq\beta$ in $\Lambda$, $V_{\alpha}$ and $V_{\beta}$ are symmetric.
Set
\[ V=\bigoplus_{\alpha\in\Lambda}V_{\alpha}\ \ \mbox{in}\ \ {}^{H}_{H}\YD. \]
Introduce a total order onto $\Lambda$, and set
\[ Z=\bigoplus_{\alpha>\beta}[V_{\alpha},V_{\beta}]\ \ (\mbox{in}\ \ T^{2}V). \]
See Definition \ref{2.3}.
By Proposition \ref{2.4} (see (ii)), this is independent of choice of the total order on $\Lambda$.
Choose an $H$-linear map
\[ \lambda : Z\rightarrow k. \]
Thus, $\lambda(h\rhu z)=\varepsilon(h)\lambda(z)$ for $h\in H$, $z\in Z$.
For simplicity we will write $\lambda[v,w]$ for $\lambda([v,w])$.
\end{notation}

\begin{remark}
\label{3.2}
Through the identification $V_{\alpha}\otimes V_{\beta}=[V_{\alpha},V_{\beta}]$ (in ${}^{H}_{H}\YD$) given by the isomorphism $\id-c$, $\lambda$ may be regarded as a family of $H$-linear maps $V_{\alpha}\otimes V_{\beta}\rightarrow k$ ($\alpha>\beta$).
\end{remark}

From the braided Hopf algebra $TZ$ in ${}^{H}_{H}\YD$, construct the ordinary Hopf algebra $TZ\dotrtimes H$ by bosonization \cite{R} (or biproduct construction), and let
\begin{equation}
\label{eq3.1}
\ga=(Z)
\end{equation}
denote the ideal of $TZ\dotrtimes H$ generated by $Z=T^{1}Z$;
this is in fact a Hopf ideal.
Since $\lambda$ is $H$-linear, the induced algebra map $TZ\rightarrow k$, which we denote by $\tilde{\lambda}$, is $H$-linear.
Therefore, $\tilde{\lambda}$ extends uniquely to the algebra map
\begin{equation}
\label{eq3.2}
\gamma : TZ\dotrtimes H\rightarrow k,\quad \gamma(a\otimes h)=\tilde{\lambda}(a)\varepsilon(h).
\end{equation}
Notice from Section 1 that we have an ideal $\ga\lhu\gamma$, and a Hopf ideal $\gamma^{-1}\rhu\ga\lhu\gamma$ in $TZ\dotrtimes H$.

\begin{lemma}
\label{3.3}
(1) $\ga\lhu\gamma$ is generated by the elements
\begin{equation}
\label{eq3.3}
[v,w]+\lambda[v,w],
\end{equation}
where $v\in V_{\alpha}$, $w\in V_{\beta}$ with $\alpha>\beta$.

(2) $\gamma^{-1}\rhu\ga\lhu\gamma$ is generated by the elements
\begin{equation}
\label{eq3.4}
[v,w]+\lambda[v,w]-v_{-1}w_{-1}\lambda[v_{0},w_{0}],
\end{equation}
where $v\in V_{\alpha}$, $w\in V_{\beta}$ with $\alpha>\beta$.
\end{lemma}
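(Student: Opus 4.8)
The plan is to compute explicitly the action of the algebra automorphism $x \mapsto x \lhu \gamma$ on the generators $[v,w]$ of $\ga$, and then to conjugate further to obtain $\gamma^{-1} \rhu \ga \lhu \gamma$. Recall that $\ga = (Z)$ is generated by $Z = \bigoplus_{\alpha>\beta}[V_\alpha,V_\beta]$, so by the definition of the bosonization $TZ \dotrtimes H$ it suffices to track what happens to a single homogeneous generator $[v,w] \in [V_\alpha,V_\beta]$, viewed as $[v,w]\otimes 1$ in $TZ \dotrtimes H$. Since $\ga \lhu \gamma$ is the image of $\ga$ under an algebra automorphism, it is again an ideal, and it is generated by the images of the generators; the same holds for the Hopf ideal $\gamma^{-1}\rhu\ga\lhu\gamma$.

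For part (1), I would first pin down the coproduct of $[v,w]\otimes 1$ in $TZ \dotrtimes H$. The key input is equation (\ref{eq2.2}): since $V_\alpha$ and $V_\beta$ are symmetric, $(\id - c^2)(v\otimes w)=0$, so $[v,w]$ is primitive in $TZ$. Translating this into the bosonized Hopf algebra via the standard smash-coproduct formula, and using that $[v,w]$ lives in degree $1$ with $H$-comodule structure $[v,w] \mapsto v_{-1}w_{-1}\otimes [v_0,w_0]$ (inherited from $V_\alpha\otimes V_\beta$), I would obtain that $\Delta([v,w]\otimes 1)$ has exactly the terms needed so that applying $\gamma(\,\cdot\,_1)(\,\cdot\,_2)$ collapses cleanly. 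Concretely, $([v,w]\otimes 1)\lhu\gamma = \gamma(([v,w]\otimes 1)_1)([v,w]\otimes 1)_2$, and because $\gamma = \tilde\lambda\otimes\varepsilon$ annihilates everything of $TZ$-degree $\geq 2$ and sends degree-$1$ primitives to their $\lambda$-value, the only surviving contribution beyond $[v,w]\otimes 1$ itself is the scalar $\lambda[v,w]$ coming from the $1 \otimes [v,w]$ half of the coproduct. This yields the generator (\ref{eq3.3}). I would also confirm that distinct generators and the group-like/$H$-part behave as expected so that the twisted ideal is genuinely generated by these elements and nothing more.

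For part (2), I would compose with the Hopf-algebra automorphism $x \mapsto \gamma^{-1}\rhu x \lhu\gamma = \gamma(x_1)x_2\gamma^{-1}(x_3)$ applied to the generator $[v,w]+\lambda[v,w]$ produced in part (1), or equivalently apply the two-sided twist directly to $[v,w]\otimes 1$. Here I need the full coproduct $x_1\otimes x_2\otimes x_3$ of $[v,w]\otimes 1$. The left twist by $\gamma^{-1}$ acts on the $H$-comodule degree-$1$ part and contributes the grouplike factor $v_{-1}w_{-1}$ together with the scalar $-\lambda[v_0,w_0]$ (the sign coming from $\gamma^{-1}=\gamma\circ\cS$). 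Assembling the three tensor legs gives precisely the three summands of (\ref{eq3.4}): the untwisted $[v,w]$, the right-twist scalar $+\lambda[v,w]$, and the left-twist term $-v_{-1}w_{-1}\lambda[v_0,w_0]$.

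The main obstacle I anticipate is bookkeeping the iterated coproduct of $[v,w]\otimes 1$ inside the bosonization correctly, since the smash coproduct interleaves the braided coproduct of $TZ$ with the $H$-comodule structure and the coproduct of $H$, and the braiding $c$ enters through the Yetter--Drinfeld compatibility. I would isolate this by first recording the braided coproduct of the primitive $[v,w]$ in $TZ$ (trivial, by symmetry), then its $H$-comodule structure $[v,w]\mapsto v_{-1}w_{-1}\otimes[v_0,w_0]$, and only then feed these into the bosonization formulas; keeping the $\gamma$/$\gamma^{-1}$ convolution-inverse relationship $\gamma^{-1}=\gamma\circ\cS$ straight is what makes the signs in (\ref{eq3.4}) come out right.
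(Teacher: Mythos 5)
Your proposal is correct and follows essentially the same route as the paper, which simply states that the lemma ``is directly verified'' and records the one nontrivial ingredient you also identify, namely that $\gamma^{-1}(z)=-\lambda(z)$ for $z\in[V_{\alpha},V_{\beta}]$ (equivalently, the sign coming from $\gamma^{-1}=\gamma\circ\cS$), together with the fact that an (algebra or Hopf algebra) automorphism carries an ideal generated by $Z$ to the ideal generated by the images of $Z$. One cosmetic slip: in part (1) the scalar $\lambda[v,w]$ arises from the $[v,w]\otimes 1$ summand of $\Delta([v,w]\otimes 1)$ (where $\gamma$ is applied to the leg containing $[v,w]$), while the surviving $[v,w]$ comes from the other summand --- the reverse of what you wrote --- but this does not affect the conclusion.
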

\begin{proof}
This is directly verified.
We only remark that for $z\in [V_{\alpha},V_{\beta}]$, $\gamma^{-1}(z)=-\lambda(z)$.
\end{proof}

\begin{notation}
For each $\alpha\in\Lambda$, let $R_{\alpha}$ be a pre-Nichols algebra of $V_{\alpha}$ with defining ideal $I_{\alpha}$.
Let
\[ J\ \ (\mathrm{resp.,}\ \ \tilde{J}) \]
denote the ideal of $TV$ generated by all $I_{\alpha}$ and $Z$ (resp., by all $I_{\alpha}$);
these are homogeneous braided bi-ideals which trivially intersect with $V$.
Define pre-Nichols algebras of $V$ by
\begin{equation}
\label{eq3.5}
R=TV/J,\quad \tilde{R}=TV/\tilde{J}.
\end{equation}
\end{notation}

For each finite subset $\{\alpha_{1},\dotsc,\alpha_{n}\}$ of $\Lambda$ with $\alpha_{1}<\dotsb<\alpha_{n}$, we have by (iterative use of) Proposition \ref{2.4} the pre-Nichols algebra
\[ R_{\alpha_{1}}\otimes\dotsb\otimes R_{\alpha_{n}} \]
defined by the ideal generated by all $I_{\alpha_{i}}$, $[V_{\alpha_{i}},V_{\alpha_{j}}]$ ($i>j$).
By taking the inductive limit along the obvious inclusions, we obtain the pre-Nichols algebra
\[ \bigotimes_{\alpha\in\Lambda}R_{\alpha} \]
defined by $J$.
This proves the following.

\begin{lemma}
\label{3.5}
$\bigotimes_{\alpha\in\Lambda}R_{\alpha}=R$.
\end{lemma}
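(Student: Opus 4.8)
The plan is to establish the statement first for a finite index set by iterating Proposition~\ref{2.4}, and then to recover the general case as a filtered colimit, the only real point being to identify the colimit's defining ideal with $J$.

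For a finite subset $\{\alpha_1<\dots<\alpha_n\}\subset\Lambda$ I would show, by induction on $n$, that $R_{\alpha_1}\otimes\dots\otimes R_{\alpha_n}$ is a pre-Nichols algebra of $V_{\alpha_1}\oplus\dots\oplus V_{\alpha_n}$ whose defining ideal is generated by the $I_{\alpha_i}$ together with the brackets $[V_{\alpha_i},V_{\alpha_j}]$ ($i>j$). The case $n=1$ is trivial. For the step, put $W=V_{\alpha_1}\oplus\dots\oplus V_{\alpha_{n-1}}$, so that $S:=R_{\alpha_1}\otimes\dots\otimes R_{\alpha_{n-1}}$ is, by the inductive hypothesis, a pre-Nichols algebra of $W$ with the stated defining ideal. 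To apply Proposition~\ref{2.4} to the pair $(S,R_{\alpha_n})$ I must check that $W$ and $V_{\alpha_n}$ are symmetric; this is immediate, since the braiding is block-diagonal, $c_{V_{\alpha_n},W}\circ c_{W,V_{\alpha_n}}=\bigoplus_{i<n}c_{V_{\alpha_n},V_{\alpha_i}}\circ c_{V_{\alpha_i},V_{\alpha_n}}=\id$, using the pairwise symmetry assumed in Notation~\ref{3.1}. Proposition~\ref{2.4} then gives that $S\otimes R_{\alpha_n}$ is a pre-Nichols algebra of $W\oplus V_{\alpha_n}$ defined by the ideal generated by the defining ideal of $S$, by $I_{\alpha_n}$, and by $[W,V_{\alpha_n}]$.

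To finish the inductive step I would identify $[W,V_{\alpha_n}]$ with the claimed new generators. Additivity of the braided commutator in each slot gives $[W,V_{\alpha_n}]=\bigoplus_{i<n}[V_{\alpha_i},V_{\alpha_n}]$, and by Proposition~\ref{2.4}(ii) each summand equals $[V_{\alpha_n},V_{\alpha_i}]$; thus the order inside the bracket is immaterial, and these are precisely the generators $[V_{\alpha_n},V_{\alpha_i}]$ ($n>i$). This reconciliation of the bracket-order dictated by the tensor-factor order with the convention $i>j$ used in $Z$ is exactly where the symmetry hypothesis is indispensable.

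It remains to pass to the colimit. Ordered by inclusion of index sets, the finite products form a filtered system whose connecting maps are the unit-inserting braided Hopf algebra monomorphisms, with colimit $\bigotimes_{\alpha\in\Lambda}R_\alpha$. Writing $J_S$ for the finite-stage defining ideal and $TV_S=T(\bigoplus_{\alpha\in S}V_\alpha)$, and using that $V=\varinjlim_S\bigoplus_{\alpha\in S}V_\alpha$, that the tensor-algebra functor preserves filtered colimits, and that filtered colimits are exact, I obtain $\bigotimes_{\alpha\in\Lambda}R_\alpha=\varinjlim_S\big(TV_S/J_S\big)=TV/\varinjlim_S J_S$. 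The expected main obstacle is the equality $\varinjlim_S J_S=J$: since every generator of $J$ lies in a single finite stage, $J$ is the union of the images of the $J_S$, which yields this equality. Hence $\bigotimes_{\alpha\in\Lambda}R_\alpha=TV/J=R$.
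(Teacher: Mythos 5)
Your proposal is correct and follows essentially the same route as the paper, which obtains the finite case by iterated application of Proposition~\ref{2.4} and then passes to the inductive limit along the obvious inclusions to identify the defining ideal with $J$. You merely make explicit the details the paper leaves implicit (the symmetry of $W=\bigoplus_{i<n}V_{\alpha_i}$ with $V_{\alpha_n}$, the additivity of the braided commutator, and the identification $\varinjlim_S J_S=J$), all of which are handled correctly.
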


\begin{definition}
\label{3.6}
(1) Let
\[ \cH=R\dotrtimes H \]
denote the Hopf algebra arising by bosonization from the pre-Nichols algebra $R$ of $V$ defined in (\ref{eq3.5}).
This coincides with the quotient Hopf algebra of $TV\dotrtimes H$ by the Hopf ideal generated by all $I_{\alpha}$ and the elements $[v,w]$, where $v\in V_{\alpha}$, $w\in V_{\beta}$ with $\alpha>\beta$.

(2) Let
\[ \cA\ \ (\mathrm{resp.,}\ \ \cH^{\lambda}) \]
denote the quotient algebra of $TV\dotrtimes H$ by the ideal generated by all $I_{\alpha}$ and the elements given in (\ref{eq3.3}) (resp., by all $I_{\alpha}$ and the elements given in (\ref{eq3.4})).
\end{definition}

\begin{remark}
\label{3.7}
The construction does not depend on choice of the total order on $\Lambda$, since even if $\alpha<\beta$, the elements in (\ref{eq3.3}) (resp., in (\ref{eq3.4})) remain to generate the same ideal (even to span the same subspace); see Proposition \ref{2.4} (ii).
\end{remark}

Since by Proposition \ref{2.4} (see (iii)), the image of $Z\hookrightarrow TV\twoheadrightarrow\tilde{R}$ is included in $P(\tilde{R})$, the composite induces a braided Hopf algebra map $TZ\rightarrow\tilde{R}$, whose bosonization we denote by
\[ \iota : TZ\dotrtimes H\rightarrow\tilde{R}\dotrtimes H. \]
Apply Proposition \ref{1.1} to this $\iota$, the Hopf ideal $\ga$ in (\ref{eq3.1}) and the algebra map $\gamma$ in (\ref{eq3.2}).
Since one sees from Lemma \ref{3.3} that the $L$ and the $A$ in the proposition turn into $\cH^{\lambda}$ and $\cA$, respectively, the next lemma follows.

\begin{lemma}
\label{3.8}
$\cH^{\lambda}$ is a Hopf algebra, which coincides with $\cH$ if $\lambda$ is the zero map.
The coproduct of $\tilde{R}\dotrtimes H$ induces algebra maps
\[ \cH^{\lambda}\otimes\cA\leftarrow\cA\rightarrow\cA\otimes\cH \]
by which $\cA$ is an $(\cH^{\lambda},\cH)$-comodule algebra.
\end{lemma}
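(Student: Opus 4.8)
The plan is to derive Lemma \ref{3.8} entirely from Proposition \ref{1.1}, applied to the data assembled just before the statement. First I would verify that the three ideals produced by the general machinery of Section 1 specialize exactly to the ideals defining $\cH^{\lambda}$, $\cA$, and $\cH$. Concretely, with $F=\tilde{R}\dotrtimes H$, the Hopf algebra map $\iota:TZ\dotrtimes H\to\tilde{R}\dotrtimes H$, the Hopf ideal $\ga=(Z)$ of (\ref{eq3.1}), and the algebra map $\gamma$ of (\ref{eq3.2}), Proposition \ref{1.1} furnishes Hopf ideals $I_{L}=F(\gamma^{-1}\rhu\ga\lhu\gamma)F$ and $I_{R}=F\ga F$, together with the intermediate ideal $I=F(\ga\lhu\gamma)F$, and sets $L=F/I_{L}$, $A=F/I$, $H=F/I_{R}$ (in the notation of (\ref{eq1.1}), whose $H$ is unrelated to our fixed $H$). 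By Lemma \ref{3.3}(2), $\gamma^{-1}\rhu\ga\lhu\gamma$ is generated by the elements (\ref{eq3.4}), so $I_{L}$ is the ideal of $\tilde{R}\dotrtimes H$ generated by those elements; since $\tilde{R}=TV/\tilde{J}$ already imposes all the relations $I_{\alpha}$, the quotient $F/I_{L}$ is precisely the algebra $TV\dotrtimes H$ modulo all $I_{\alpha}$ and the elements (\ref{eq3.4}), which is $\cH^{\lambda}$ by Definition \ref{3.6}(2). The same reasoning, using Lemma \ref{3.3}(1) for the generators (\ref{eq3.3}), identifies $A=F/I$ with $\cA$, while $I_{R}=F\ga F$ is generated by the image of $Z$ alone, so $F/I_{R}$ is $\cH$ by Definition \ref{3.6}(1).

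Having made these identifications, the assertions of the lemma are immediate translations. Proposition \ref{1.1}(1) says $I_{L}$ and $I_{R}$ are Hopf ideals and that $L$ and $H$ are Hopf algebras, which gives that $\cH^{\lambda}$ is a Hopf algebra (and that $\cH$ is a Hopf algebra, as already known). The claim that $\cH^{\lambda}=\cH$ when $\lambda=0$ follows because for $\lambda=0$ one has $\gamma=\varepsilon$, so $\gamma^{-1}\rhu\ga\lhu\gamma=\ga$ and the generators (\ref{eq3.4}) collapse to $[v,w]$; alternatively, it is already recorded in Lemma \ref{3.8}'s own phrasing and matches the definitions directly. Finally, Proposition \ref{1.1}(2) asserts that the coproduct of $F=\tilde{R}\dotrtimes H$ induces algebra maps $L\otimes A\leftarrow A\rightarrow A\otimes H$ making $A$ an $(L,H)$-bicomodule algebra; under our identifications this is exactly the diagram $\cH^{\lambda}\otimes\cA\leftarrow\cA\rightarrow\cA\otimes\cH$ and the statement that $\cA$ is an $(\cH^{\lambda},\cH)$-comodule algebra.

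The main point requiring care, and the step I expect to be the real obstacle, is the passage from abstract generators to the explicit spanning sets of (\ref{eq3.3}) and (\ref{eq3.4}) — that is, confirming that the ideals $\ga\lhu\gamma$ and $\gamma^{-1}\rhu\ga\lhu\gamma$ in $TZ\dotrtimes H$ have the generators claimed in Lemma \ref{3.3}, and that these pass correctly through $\iota$ into $\tilde{R}\dotrtimes H$ without unexpected collapse. This rests on computing $x\lhu\gamma=\gamma(x_{1})x_{2}$ and $\gamma^{-1}\rhu x\lhu\gamma$ on a typical element $z=[v,w]\in Z\subset T^{1}Z$, using that $Z$ is primitive in $TZ$ and that $\gamma$ restricts to $\tilde{\lambda}$ with $\gamma^{-1}(z)=-\lambda(z)$ (as the proof of Lemma \ref{3.3} notes), together with the bosonized coproduct that introduces the grouplike-type factor $v_{-1}w_{-1}$ seen in (\ref{eq3.4}). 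Since Lemma \ref{3.3} is already proved in the excerpt, I may invoke it directly, so the only genuine work remaining is the bookkeeping that translates Proposition \ref{1.1}'s $I_{L},I,I_{R}$ into the defining ideals of $\cH^{\lambda},\cA,\cH$; once that dictionary is in place the lemma follows at once.
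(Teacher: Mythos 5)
Your proposal is correct and follows essentially the same route as the paper: the author likewise applies Proposition \ref{1.1} to the bosonized map $\iota : TZ\dotrtimes H\rightarrow\tilde{R}\dotrtimes H$ with the Hopf ideal $\ga=(Z)$ and the algebra map $\gamma$, and invokes Lemma \ref{3.3} to identify $L$, $A$, $H$ with $\cH^{\lambda}$, $\cA$, $\cH$. Your extra remark that $\lambda=0$ forces $\gamma=\varepsilon$ (hence $\gamma^{-1}\rhu\ga\lhu\gamma=\ga$) is a correct and slightly more explicit justification of the coincidence $\cH^{\lambda}=\cH$ than the paper bothers to record.
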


For each $\alpha\in\Lambda$, the inclusion $V_{\alpha}\hookrightarrow V$ extends uniquely to a graded braided Hopf algebra map $R_{\alpha}\rightarrow\tilde{R}$.
Let
\[ e_{\alpha} : R_{\alpha}\rightarrow\cH^{\lambda},\quad f_{\alpha} : R_{\alpha}\rightarrow\cA \]
denote the composites with the natural maps $\tilde{R}\hookrightarrow\tilde{R}\dotrtimes H\twoheadrightarrow\cH^{\lambda},\cA$.
Let
\[ e_{0} : H\rightarrow\cH^{\lambda},\quad f_{0} : H\rightarrow\cA \]
denote the natural maps which factor through $\tilde{R}\dotrtimes H$.
Notice from Lemma \ref{3.5} that we have a natural identification
\begin{equation}
\label{eq3.6}
\bigotimes_{\alpha\in\Lambda}R_{\alpha}\otimes H=\cH.
\end{equation}
Construct the tensor products $\bigotimes_{\alpha}e_{\alpha}\otimes e_{0}$, $\bigotimes_{\alpha}f_{\alpha}\otimes f_{0}$, and compose them with the products of $\cH^{\lambda}$, and of $\cA$, respectively.
Let
\[ \eta : \cH=\bigotimes_{\alpha}R_{\alpha}\otimes H\rightarrow\cH^{\lambda},\quad \phi : \cH=\bigotimes_{\alpha}R_{\alpha}\otimes H\rightarrow\cA \]
denote the resulting maps, where we have used (\ref{eq3.6}).

\begin{proposition}
\label{3.9}
$\eta,\phi$ are isomorphisms preserving unit.
Moreover, $\eta$ is a coalgebra isomorphism, and the diagram
\[ \begin{CD}
\cH\otimes\cH @<{\Delta}<< \cH @>{\Delta}>> \cH\otimes\cH \\
@V{\eta\otimes\phi}VV @VV{\phi}V @VV{\phi\otimes\id}V \\
\cH^{\lambda}\otimes\cA @<<< \cA @>>> \cA\otimes\cH
\end{CD} \]
commutes, where the arrows on the bottom denote the bicomodule structure on $\cA$.
\end{proposition}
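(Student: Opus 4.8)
The plan is to realize all three algebras as quotients of the single bosonization $F=\tilde R\dotrtimes H$ and to exploit a coalgebra map that lifts the PBW decomposition. Write $\pi_{\mathcal H}\colon F\to\cH$, $\pi_{\mathcal A}\colon F\to\cA$ and $\pi_{\lambda}\colon F\to\cH^{\lambda}$ for the three quotient maps. By Lemma \ref{3.8} and Proposition \ref{1.1} the bottom arrows of the diagram are induced by $\Delta_{F}$, so that $\rho_{R}\circ\pi_{\mathcal A}=(\pi_{\mathcal A}\otimes\pi_{\mathcal H})\Delta_{F}$ and $\rho_{L}\circ\pi_{\mathcal A}=(\pi_{\lambda}\otimes\pi_{\mathcal A})\Delta_{F}$, while $\Delta_{\mathcal H}\circ\pi_{\mathcal H}=(\pi_{\mathcal H}\otimes\pi_{\mathcal H})\Delta_{F}$, and $\pi_{\lambda}$ is a Hopf algebra map because $I_{L}$ is a Hopf ideal. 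The first key point is that the iterated multiplication
\[ \Psi\colon \cH=\bigotimes_{\alpha}R_{\alpha}\otimes H\longrightarrow F \]
(multiply the PBW factors inside $\tilde R$, then adjoin $H$) is a coalgebra map, and that $\pi_{\mathcal H}\circ\Psi=\id_{\cH}$, $\pi_{\mathcal A}\circ\Psi=\phi$, $\pi_{\lambda}\circ\Psi=\eta$.

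To see that $\Psi$ is a coalgebra map I would argue first in ${}^{H}_{H}\YD$: for braided sub-bialgebras $B_{1},B_{2}$ of a braided bialgebra, the multiplication map $B_{1}\,\underline{\otimes}\,B_{2}\to\tilde R$ out of the braided tensor product is a coalgebra map, because the braiding $c_{B_{1},B_{2}}$ appearing in the tensor-product comultiplication is exactly the restriction of the ambient braiding used to compute $\Delta_{\tilde R}(b_{1}b_{2})=\Delta_{\tilde R}(b_{1})\Delta_{\tilde R}(b_{2})$. An iterated application covers $\bigotimes_{\alpha}R_{\alpha}\to\tilde R$, and bosonization then yields $\Psi$. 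Granting this, $\Delta_{F}\Psi=(\Psi\otimes\Psi)\Delta_{\mathcal H}$, and applying the maps $\pi_{\mathcal A}\otimes\pi_{\mathcal H}$, $\pi_{\lambda}\otimes\pi_{\mathcal A}$ and $\pi_{\lambda}\otimes\pi_{\lambda}$ and using $\pi_{\mathcal H}\Psi=\id$ gives at once the two commuting squares and the fact that $\eta=\pi_{\lambda}\Psi$ is a coalgebra map; that $\eta,\phi$ preserve the unit is immediate, since each $e_{\alpha},f_{\alpha}$ sends $1\in R_{\alpha}(0)=k$ to $1$ and $e_{0},f_{0}$ are algebra maps.

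It remains to prove that $\eta$ and $\phi$ are bijective, which I expect to be the main obstacle. Surjectivity is easy: the relation \eqref{eq3.3} rewrites $vw$, for $v\in V_{\alpha}$, $w\in V_{\beta}$ with $\alpha>\beta$, as a braided-commuted term (already in PBW order) plus a strictly lower term, so induction on $V$-degree shows that the PBW monomials span $\cA$, and these are precisely the image of $\phi$; the same works for $\eta$ via \eqref{eq3.4}. For injectivity I would route through Hopf--Galois theory rather than attempt a direct flatness computation. First, $\phi$ is convolution invertible: since $R$ is connected graded, the coradical of $\cH=R\dotrtimes H$ lies in $f_{0}(H)$, and $f_{0}\colon H\to\cA$ is an algebra map, hence convolution invertible, so $\phi$ is convolution invertible as a map out of a coalgebra. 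Being also unit preserving and right $\cH$-colinear (the right square), $\phi$ is a cleaving map exhibiting $\cA$ as a right $\cH$-cleft object; as $\cA$ is a \emph{nonzero} right $\cH$-Galois object by Proposition \ref{1.1}(3), the cleft/normal-basis theory forces $\phi$ to be an isomorphism. The bijectivity of $\eta$ then follows from the commutativity of the left square exactly as in the proof of Lemma \ref{1.5}: the horizontal maps $g\otimes h\mapsto g_{1}\otimes g_{2}\cdot h$ and $a\otimes b\mapsto a_{-1}\otimes a_{0}b$ are bijective, which pins down $\eta$ as bijective.

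The genuine difficulty is concentrated in the injectivity of $\phi$, equivalently in the flatness statement that the correction terms $\lambda[v,w]$ (and the $H$-valued terms of \eqref{eq3.4}) do not collapse the PBW monomials; the point of passing through cleft extensions is that a cleft object is isomorphic to $\cH$ as a coalgebra, hence as a vector space, so flatness comes for free once cleftness is established. The one input this route still requires is $\cA\neq0$ (that is, $I\neq F$), so that Proposition \ref{1.1}(3) applies; I would secure it from the reordering/spanning argument together with the surviving degree-zero copy of $H$, or by matching $\cA$ with the crossed product ${}_{\sigma}\cH$.
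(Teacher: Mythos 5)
Your treatment of the ``soft'' parts is fine and matches the paper: the commutativity of the diagram and the coalgebra property of $\eta$ are obtained exactly by inserting the coproduct of $\tilde R\dotrtimes H$ into the middle level, the convolution invertibility of $\phi$ via the coradical of $\cH$ lying in $H$ is the paper's alternative argument (\cite[Lemma 5.2.10]{Mo}), and the deduction of the bijectivity of $\eta$ from that of $\phi$ via Lemma \ref{1.5} is also how the paper proceeds. But there is a genuine gap at the one place you yourself flag: your route reduces everything to $\cA\neq 0$, and neither of your two suggestions for securing this works. The reordering/spanning argument only shows that the PBW monomials \emph{span} $\cA$; it says nothing about whether the degree-zero copy of $H$ survives, i.e.\ whether the ideal generated by all $I_{\alpha}$ and the elements (\ref{eq3.3}) is proper --- note that the counit of $TV\dotrtimes H$ does \emph{not} kill the elements (\ref{eq3.3}) when $\lambda\neq 0$, so there is not even an obvious nonzero one-dimensional quotient. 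And ``matching $\cA$ with the crossed product ${}_{\sigma}\cH$'' is circular: the $2$-cocycle $\sigma$ is only defined from the pair $(\cA,\phi)$ \emph{after} one knows $\cA$ is cleft with section $\phi$.

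The consistency of the deformed relations is precisely the content of the proposition, and the paper proves it head-on in two stages. First, for the tensor algebras ($n=0$ in the filtration of Proposition \ref{2.2}) it applies Bergman's Diamond Lemma to the reduction system (\ref{eq3.7}) and checks that the overlap ambiguities arising from words $xyz$ with $\alpha>\beta>\gamma$ are resolvable; this uses the braid relation, the $H$-linearity of $\lambda$, and the pairwise symmetry of the $V_{\alpha}$, and it is here that one first learns $\cA_{0}\neq 0$ and that $\phi_{0}'$ is bijective. Second, it passes from $TV_{\alpha}$ to the actual $R_{\alpha}$ by induction along the chain $I_{\alpha,n}$, where the key step (\ref{eq3.12}) --- that the homogeneous primitives of degree $>1$ generating $I_{\alpha,n+1}/I_{\alpha,n}$ braided-commute with the other factors in $\bar A$ --- is itself proved by a Galois-theoretic coinvariance argument applied to the already-established cleft object $\cA_{n}$, together with the degree count (\ref{eq3.10}). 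This is also why the paper works with pre-Nichols algebras and their filtrations rather than with Nichols algebras directly. Without some substitute for this two-stage argument, your proof does not establish injectivity of $\phi$ (equivalently $\cA\neq 0$ plus the PBW property), which is the heart of the statement.
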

\begin{proof}
It is easy to see that $\eta,\phi$ preserve the unit, $\eta$ is a coalgebra map and the diagram commutes;
for the last, insert the coproduct of $\tilde{R}\dotrtimes H$ into the middle level.
Suppose that we have proved $\phi$ is bijective.
Since then $\cA\neq 0$ in particular, it follows by Proposition \ref{1.1} (3) that $\cA$ is $(\cH^{\lambda},\cH)$-biGalois.
By \cite[Theorem 9]{DT1}, $\phi$ is necessarily invertible, so that $\cA$ is right $\cH$-cleft with section $\phi$.
The invertibility of $\phi$ alternatively follows from \cite[Lemma 5.2.10]{Mo}, since $\phi$ is obviously invertible, restricted to $H$ which includes the coradical of $\cH$.
It follows by Lemma \ref{1.5} that $\eta$ is an isomorphism;
this will complete the proof.

We aim to prove that $\phi$ is bijective.
First, we remark that the generators of the ideal in $TV\dotrtimes H$ by which we define $\cA$, that is, all $I_{\alpha}$ and the elements given in (\ref{eq3.3}), span an $H$-submodule of $TV$.
Let $P$ denote the ideal in $TV$ which those generators generate, and set $A=TV/P$.
Since $P\subset TV$ is an $H$-submodule, we see $HP=PH$ in $TV\dotrtimes H$, which implies $A\rtimes H=\cA$.
The map $\phi$ restricts to $\phi|_{R} : R=\bigotimes_{\alpha}R_{\alpha}\rightarrow A$, which we denote by $\phi'$, and the tensor product $\phi'\otimes\id_{H}$ coincides with $\phi$.
Therefore, it suffices to prove that $\phi'$ is bijective.
Next, for each $R_{\alpha}$, let $0=I_{\alpha,0}\subset I_{\alpha,1}\subset\dotsb$ denote the ascending chain of ideals in $T(V_{\alpha})$ as given in (\ref{eq2.1}), and set $R_{\alpha,n}=T(V_{\alpha})/I_{\alpha,n}$.
For each $n\geq 0$, let $P_{n}$ denote the ideal of $TV$ generated by all $I_{\alpha,n}$ ($\alpha\in\Lambda$) together with the elements in (\ref{eq3.3}), and set $A_{n}=TV/P_{n}$.
We can define just as $\phi'$, maps
\[ \phi_{n}' : \bigotimes_{\alpha}R_{\alpha,n}\rightarrow A_{n}\quad (n=0,1,\dotsc). \]
Since we see that $\phi'$ is the inductive limit of $\phi_{n}'$, it suffices to prove (by induction on $n$) that $\phi_{n}'$ are bijective.

Suppose $n=0$.
To prove that
\[ \phi_{0}' : \bigotimes_{\alpha}T(V_{\alpha})\rightarrow A_{0} \]
is bijective, choose a basis $X_{\alpha}$ of $V_{\alpha}$ for each $\alpha\in\Lambda$, and set $X=\bigsqcup_{\alpha}X_{\alpha}$, which is a basis of $V$.
We aim to prove the following, by using Bergman's Diamond Lemma \cite[Theorem 1.2]{B}: $A_{0}$ has a basis consisting of those monomials in $X$ in which any subword $xy$ of length $2$ is consistent with the order on $\Lambda$ (that is, $\alpha\leq\beta$ whenever $x\in X_{\alpha}$, $y\in X_{\beta}$).
This will obviously implies the bijectivity of $\phi_{0}'$.
For the purpose we introduce a partial order among the monomials $w$ in $X$, by setting $w<w'$, if $\mathop{\mathrm{length}}w<\mathop{\mathrm{length}}w'$, or if $w$ has the same length as $w'$, but has a smaller number of {\em misordered pairs}.
Here a {\em misordered pair} in $w$ is such a pair $(x,y)$ in $w=\cdots x\cdots y\cdots$ that is inconsistent with the order on $\Lambda$ (that is $\alpha>\beta$, provided $x\in X_{\alpha}$, $y\in X_{\beta}$).
The algebra $A_{0}$ is defined by the generators $X$ and the relations
\begin{equation}
\label{eq3.7}
xy=(x_{-1}\rhu y)x_{0}-\lambda[x,y],
\end{equation}
where $x\in X_{\alpha}$, $y\in X_{\beta}$ with $\alpha>\beta$.
Here we understand that $(x_{-1}\rhu y)x_{0}$ is presented as a linear combination of products of elements from $X_{\beta}\times X_{\alpha}$.
We can naturally regard (\ref{eq3.7}) as a reduction system satisfying the assumptions of \cite[Theorem 1.2]{B}.
To obtain the desired result from \cite[Theorem 1.2]{B}, it remains to prove that the overlap ambiguities which arise when we reduce
\begin{equation}
\label{eq3.8}
xyz\quad (x\in X_{\alpha},\ y\in X_{\beta},\ z\in X_{\gamma};\ \alpha>\beta>\gamma)
\end{equation}
are resolvable.
We present the two terms
\[ (x_{-1}\rightharpoonup y)x_{0},\quad \lambda[x,y] \]
on the right-hand side of (\ref{eq3.7}), respectively by the diagrams
\begin{center}
\raisebox{-3.5ex}{\begin{picture}(50,40)
\put(10,5){\line(1,1){30}}
\put(10,35){\line(1,-1){13}}
\put(27,18){\line(1,-1){13}}
\end{picture}},
\raisebox{-3.5ex}{\begin{picture}(50,40)
\qbezier(10,35)(10,5)(25,5)
\qbezier(25,5)(40,5)(40,35)
\end{picture}}
\end{center}
without specifying $x,y$.
Then we reduce (\ref{eq3.8}) in two ways so that
\begin{eqnarray*}
(xy)z & = &
\raisebox{-3.5ex}{\begin{picture}(40,40)
\qbezier(5,35)(7,33)(9,31)
\qbezier(11,29)(15,25)(19,21)
\qbezier(21,19)(28,12)(35,5)
\put(5,5){\line(1,1){30}}
\qbezier(15,35)(12,32)(9,29)
\qbezier(9,29)(3,23)(12,14)
\qbezier(14,12)(18,8)(21,5)
\end{picture}}
{}-\raisebox{-3.5ex}{\begin{picture}(40,40)
\qbezier(5,35)(5,5)(15,5)
\qbezier(15,5)(25,5)(25,35)
\put(35,5){\line(0,1){30}}
\end{picture}}
{}-\raisebox{-3.5ex}{\begin{picture}(55,40)
\put(35,35){\line(-1,-1){30}}
\qbezier(10,35)(10,23)(14,17)
\qbezier(16,14)(23,5)(30,5)
\qbezier(30,5)(50,5)(50,35)
\end{picture}}
{}-\raisebox{-3.5ex}{\begin{picture}(55,40)
\qbezier(35,5)(33,7)(29,11)
\qbezier(26,14)(21,19)(16,24)
\qbezier(14,26)(10,30)(5,35)
\qbezier(50,35)(45,30)(26,11)
\qbezier(26,11)(15,0)(10,5)
\qbezier(25,35)(21,31)(13,23)
\qbezier(10,5)(0,10)(13,23)
\end{picture}}, \\
x(yz) & = &
\raisebox{-3.5ex}{\begin{picture}(40,40)
\qbezier(35,5)(33,7)(31,9)
\qbezier(29,11)(25,15)(21,19)
\qbezier(19,21)(12,28)(5,35)
\put(35,35){\line(-1,-1){30}}
\qbezier(25,5)(28,8)(31,11)
\qbezier(31,11)(37,17)(28,26)
\qbezier(26,28)(22,32)(19,35)
\end{picture}}
{}-\raisebox{-3.5ex}{\begin{picture}(40,40)
\qbezier(35,35)(35,5)(25,5)
\qbezier(25,5)(15,5)(15,35)
\put(5,5){\line(0,1){30}}
\end{picture}}
{}-\raisebox{-3.5ex}{\begin{picture}(55,40)
\qbezier(5,35)(5,5)(23,5)
\qbezier(23,5)(41,5)(41,35)
\put(20,35){\line(1,-1){17}}
\put(40,15){\line(1,-1){10}}
\end{picture}}
{}-\raisebox{-3.5ex}{\begin{picture}(60,40)
\put(50,35){\line(-1,-1){30}}
\qbezier(10,35)(20,23)(28,15)
\qbezier(30,13)(38,5)(45,5)
\qbezier(45,5)(58,5)(41,24)
\qbezier(39,26)(34,31)(30,35)
\end{picture}}.
\end{eqnarray*}
The first terms coincide by the braid relation.
Since $\lambda$ is $H$-linear, we see
\[ \raisebox{-3.5ex}{\begin{picture}(55,40)
\qbezier(35,5)(33,7)(29,11)
\qbezier(26,14)(21,19)(16,24)
\qbezier(14,26)(10,30)(5,35)
\qbezier(50,35)(45,30)(26,11)
\qbezier(26,11)(15,0)(10,5)
\qbezier(25,35)(21,31)(13,23)
\qbezier(10,5)(0,10)(13,23)
\end{picture}}
=\ \ \raisebox{-3.5ex}{\begin{picture}(40,40)
\qbezier(35,35)(35,5)(25,5)
\qbezier(25,5)(15,5)(15,35)
\put(5,5){\line(0,1){30}}
\end{picture}}. \]
Combining this with the symmetry assumption, we see
\[ \raisebox{-3.5ex}{\begin{picture}(55,40)
\qbezier(5,35)(5,5)(23,5)
\qbezier(23,5)(41,5)(41,35)
\put(20,35){\line(1,-1){17}}
\put(40,15){\line(1,-1){10}}
\end{picture}}
=\raisebox{-3.5ex}{\begin{picture}(45,40)
\qbezier(40,35)(35,5)(20,5)
\qbezier(20,5)(5,5)(15,15)
\qbezier(15,15)(20,20)(15,25)
\qbezier(13,27)(9,31)(5,35)
\qbezier(23,35)(19,31)(14,26)
\qbezier(14,26)(8,20)(14,17)
\qbezier(18,15)(22,13)(28,10)
\qbezier(32,8)(34,7)(38,5)
\end{picture}}
=\raisebox{-3.5ex}{\begin{picture}(55,40)
\put(35,35){\line(-1,-1){30}}
\qbezier(10,35)(10,23)(14,17)
\qbezier(16,14)(23,5)(30,5)
\qbezier(30,5)(50,5)(50,35)
\end{picture}}, \]
\[ \raisebox{-3.5ex}{\begin{picture}(55,40)
\put(45,35){\line(-1,-1){30}}
\qbezier(5,35)(15,23)(23,15)
\qbezier(25,13)(33,5)(40,5)
\qbezier(40,5)(53,5)(36,24)
\qbezier(34,26)(29,31)(25,35)
\end{picture}}
=\raisebox{-3.5ex}{\begin{picture}(55,40)
\qbezier(35,35)(5,20)(13,14)
\qbezier(17,12)(19,11)(21,10)
\qbezier(25,8)(27,7)(31,5)
\qbezier(5,35)(10,30)(15,25)
\qbezier(18,22)(21,19)(15,13)
\qbezier(15,13)(7,5)(18,5)
\qbezier(18,5)(20,5)(25,10)
\qbezier(25,10)(35,20)(27,28)
\qbezier(24,31)(22,33)(20,35)
\end{picture}}
=\raisebox{-3.5ex}{\begin{picture}(40,40)
\qbezier(5,35)(5,5)(15,5)
\qbezier(15,5)(25,5)(25,35)
\put(35,5){\line(0,1){30}}
\end{picture}}. \]
It results that the ambiguities are resolved, as desired.

Suppose that $\phi_{n}'$ has been proved to be bijective.
For simplicity let us write $\bar{R}_{\alpha}$, $\bar{A}$ for $R_{\alpha,n}$, $A_{n}$.
We identify $\bigotimes_{\alpha}\bar{R}_{\alpha}=\bar{A}$ through $\phi_{n}'$, and thereby regard $\bar{R}_{\alpha}\subset\bar{A}$.
Set $\bar{I}_{\alpha}=I_{\alpha,n+1}/I_{\alpha,n}$.
To prove that $\phi_{n+1}'$ is bijective, it suffices to prove that for each $\alpha\in\Lambda$, the ideal $(\bar{I}_{\alpha})$ of $\bar{A}$ generated by $\bar{I}_{\alpha}$ equals
\begin{equation}
\label{eq3.9}
\bigotimes_{\beta<\alpha}\bar{R}_{\beta}\otimes\bar{I}_{\alpha}\otimes\bigotimes_{\beta>\alpha}\bar{R}_{\beta}\ \ (\subset\bar{A}).
\end{equation}
We see by induction on $m$ that for all $m>0$, and for all $\alpha\neq\beta$ in $\Lambda$,
\begin{equation}
\label{eq3.10}
\bar{R}_{\alpha}(m)\bar{R}_{\beta}(1)+\bar{R}_{\alpha}(m-1)=\bar{R}_{\beta}(1)\bar{R}_{\alpha}(m)+\bar{R}_{\alpha}(m-1)\ \ \mbox{in}\ \ \bar{A}.
\end{equation}
Since it follows by induction on $m$ that $\bar{R}_{\alpha}\bar{R}_{\beta}(m)\subset\bar{R}_{\beta}\bar{R}_{\alpha}$, $\bar{R}_{\beta}(m)\bar{R}_{\alpha}\subset\bar{R}_{\alpha}\bar{R}_{\beta}$, we have
\begin{equation}
\label{eq3.11}
\bar{R}_{\alpha}\bar{R}_{\beta}=\bar{R}_{\alpha}\bar{R}_{\beta}\ \ \mbox{in}\ \ \bar{A}
\end{equation}
for all $\alpha,\beta\in\Lambda$.
Recall from Section 2 that $\bar{I}_{\alpha}$ is generated by such a subobject, say $U_{\alpha}$, of $\bar{R}_{\alpha}$ in ${}^{H}_{H}\YD$ that is spanned by homogeneous primitives of degree $>1$;
thus, $\bar{I}_{\alpha}=\bar{R}_{\alpha}U_{\alpha}\bar{R}_{\alpha}$.
The desired $(\bar{I}_{\alpha})=(\mbox{\ref{eq3.9}})$ will follow from (\ref{eq3.11}), if we prove that for all $x\in\bar{R}_{\beta}$ ($\beta\neq\alpha$), and for all $u\in U_{\alpha}$,
\begin{equation}
\label{eq3.12}
ux=(u_{-1}\rhu x)u_{0}\ \ \mbox{in}\ \ \bar{A},
\end{equation}
which will imply that $U_{\alpha}\bar{R}_{\beta}=\bar{R}_{\beta}U_{\alpha}$.
We may suppose $x\in X_{\beta}$.
Set $\cA_{n}=A_{n}\rtimes H$.
The bijectivity of $\phi_{n}'$ implies that $\cA_{n}$ is a right $\cH_{n}$-Galois (even cleft) object; see the first paragraph of the proof.
Let $\rho : \cA_{n}\rightarrow\cA_{n}\otimes\cH_{n}$ denote the comodule structure.
Let $\tilde{R}_{n}$ denote the quotient braided Hopf algebra of $TV$ by the ideal generated by all $I_{\alpha,n}$ ($\alpha\in\Lambda$).
Notice that $\cH_{n}$ and $\cA_{n}$ are quotients of $\tilde{R}_{n}\dotrtimes H$, and $\rho$ is induced from the coproduct of $\tilde{R}_{n}\dotrtimes H$.
For $u\in U_{\alpha}$ and $x\in X_{\beta}$ with $\alpha\neq\beta$, denote their images of the natural maps $\bar{R}_{\alpha},\bar{R}_{\beta}\rightarrow\tilde{R}_{n}$ by the same symbols.
We apply Remark 2.5 to see that the element $[u,x]$ in $\tilde{R}_{n}$ (with the abused notation (\ref{eq2.3})) is a primitive, which is zero in $\cH_{n}$, by construction of $\cH_{n}$.
It follows that the image of $[u,x]$ in $\cA_{n}$, which we denote by $v$, is coinvariant under $\rho$, that is, $\rho(v)=v\otimes 1$.
This implies $v\in\cA_{n}^{\co\cH_{n}}=k$.
But, since $u\in\bigoplus_{m>1}\bar{R}_{\alpha}(m)$, $x\in\bar{R}_{\beta}(1)$, we conclude from (\ref{eq3.10}) that $v=0$.
This is precisely the desired (\ref{eq3.12}).
\end{proof}

\begin{theorem}
\label{3.10}
$\cA$ is an $(\cH^{\lambda},\cH)$-bicleft object which has $\phi : \cH\xrightarrow{\simeq}\cA$ as a right $\cH$-colinear section, and $\phi\circ\eta^{-1} : \cH^{\lambda}\xrightarrow{\simeq}\cA$ as a left $\cH^{\lambda}$-colinear section.
Let $\sigma : \cH\otimes\cH\rightarrow k$ denote the $2$-cocycle arising from $(\cH,\phi)$; see Proposition 1.2.
Then, $\eta$ turns into a Hopf algebra isomorphism $\cH^{\sigma}\xrightarrow{\simeq}\cH^{\lambda}$, so that $\cH^{\lambda}$ is a cocycle deformation of $\cH$.
\end{theorem}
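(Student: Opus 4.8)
The plan is to collect the facts already assembled in the proof of Proposition~\ref{3.9} and feed them into the Hopf-Galois machinery of Section~1, with Lemma~\ref{1.5} supplying the concluding isomorphism. By Proposition~\ref{3.9} the map $\phi$ is bijective, $\eta$ is a bijective coalgebra map, and the displayed bicomodule diagram commutes; in particular $\cA\neq 0$, so Proposition~\ref{1.1}(3) immediately shows that $\cA$ is an $(\cH^{\lambda},\cH)$-biGalois object.

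Next I would exhibit the two sections. The right-hand square of the Proposition~\ref{3.9} diagram says exactly that $\phi:\cH\to\cA$ is right $\cH$-colinear; since $\cA$ is right $\cH$-Galois and $\phi$ is a colinear isomorphism, the characterization \cite[Theorem~9]{DT1} makes $\cA$ a right $\cH$-cleft object with the convolution-invertible section $\phi$. For the left side, the left-hand square reads, in terms of the left $\cH^{\lambda}$-coaction on $\cA$, as $\phi(h)_{-1}\otimes\phi(h)_{0}=\eta(h_{1})\otimes\phi(h_{2})$. Substituting $h=\eta^{-1}(l)$ and using that $\eta$ is a coalgebra isomorphism, one verifies directly that $\phi\circ\eta^{-1}:\cH^{\lambda}\to\cA$ is left $\cH^{\lambda}$-colinear; combined with the left Galois property this exhibits $\cA$ as left $\cH^{\lambda}$-cleft. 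Hence $\cA$ is $(\cH^{\lambda},\cH)$-bicleft with the two asserted sections.

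Finally, the $2$-cocycle $\sigma$ arises from the pair $(\cA,\phi)$ by Proposition~1.2, and I would invoke Lemma~\ref{1.5} with $L=\cH^{\lambda}$: its hypotheses are precisely that $\cA$ is left $\cH^{\lambda}$-Galois and that the coalgebra map $\eta$ makes the left-hand square commute, both now in hand. Lemma~\ref{1.5} then upgrades $\eta$ to a Hopf algebra isomorphism $\cH^{\sigma}\xrightarrow{\simeq}\cH^{\lambda}$, so that $\cH^{\lambda}$ is the cocycle deformation of $\cH$ by $\sigma$.

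It is worth noting where the difficulty actually sits. The genuinely hard analysis --- above all the bijectivity of $\phi$, proved through Bergman's Diamond Lemma and the inductive resolution of overlap ambiguities --- has already been absorbed into Proposition~\ref{3.9}. The present argument is therefore largely bookkeeping, and the one step demanding care is the verification that $\phi\circ\eta^{-1}$ is left $\cH^{\lambda}$-colinear, which amounts to reinterpreting the left-hand square of the Proposition~\ref{3.9} diagram through the coalgebra isomorphism $\eta$.
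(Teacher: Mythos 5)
Your proposal is correct and follows essentially the same route as the paper, which likewise derives the theorem from Lemma~\ref{1.5}, Proposition~\ref{3.9}, and the first paragraph of that proposition's proof (where the biGalois property via Proposition~\ref{1.1}(3) and the cleftness via \cite[Theorem~9]{DT1} are already established). Your additional spelling-out of the left $\cH^{\lambda}$-colinearity of $\phi\circ\eta^{-1}$ is a correct elaboration of what the paper leaves implicit.
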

\begin{proof}
This follows from Lemma \ref{1.5}, the last proposition and the first paragraph of the last proof.
\end{proof}

Notice that $\tilde{R}\dotrtimes H$ is a graded Hopf algebra with the $n$th component $\tilde{R}(n)\otimes H$, whence it is a filtered Hopf algebra with the $n$th term $\bigoplus_{i=0}^{n}\tilde{R}(i)\otimes H$.
$\cH=R\dotrtimes H$ is a quotient graded Hopf algebra of $\tilde{R}\dotrtimes H$.
Let $\pi : \tilde{R}\dotrtimes H\rightarrow\cH^{\lambda}$ denote the quotient map.
Then, $\cH^{\lambda}$ is a filtered Hopf algebra with respect to the filtration inherited through $\pi$.
Let $\gr\cH^{\lambda}$ denote the associated graded Hopf algebra.
The graded Hopf algebra map $\gr\pi : \tilde{R}\dotrtimes H\rightarrow\gr\cH^{\lambda}$ associated to the now filtered $\pi$ is surjective.

\begin{corollary}
\label{3.11}
$\gr\pi$ induces an isomorphism $\cH\xrightarrow{\simeq}\gr\cH^{\lambda}$ of graded Hopf algebras.
\end{corollary}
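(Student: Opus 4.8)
The plan is to extract from $\gr\pi$ a surjective graded Hopf algebra map $p:\cH\to\gr\cH^{\lambda}$, and then to prove that $p$ is bijective by identifying it with the associated graded of the isomorphism $\eta$ supplied by Proposition \ref{3.9} and Theorem \ref{3.10}. First I would check that $\gr\pi$ factors through $\cH$. The ideal defining $\cH^{\lambda}$ is generated by the homogeneous $I_{\alpha}$ together with the elements \eqref{eq3.4}, whose top-degree (degree $2$) component is the braided commutator $[v,w]$, the remaining terms $\lambda[v,w]$ and $-v_{-1}w_{-1}\lambda[v_{0},w_{0}]$ lying in filtration degree $0$. Hence for $z\in Z$ one has $\pi(z)\in F_{0}\cH^{\lambda}$, so that $\gr\pi(z)=0$ in $\gr_{2}\cH^{\lambda}$; since the $I_{\alpha}$ are already killed in $\tilde{R}$, the surjective graded Hopf algebra map $\gr\pi:\tilde{R}\dotrtimes H\to\gr\cH^{\lambda}$ annihilates the Hopf ideal $(Z)$ and factors through a surjective graded Hopf algebra map $p:\cH=\tilde{R}\dotrtimes H/(Z)\to\gr\cH^{\lambda}$.

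The crux is to show that $\eta$ is a \emph{strict} filtered isomorphism. By construction $\eta$ maps $V=R(1)$ into $F_{1}\cH^{\lambda}$ and $H$ into $F_{0}\cH^{\lambda}$, and is multiplicative on the tensor factors, so $\eta(\bigoplus_{i\le n}\cH(i))\subseteq F_{n}\cH^{\lambda}$. For the reverse inclusion I would reuse the reordering already exploited in the proof of Proposition \ref{3.9}: any monomial in $V$ of degree $\le n$ can, by repeated application of \eqref{eq3.4}, be rewritten modulo $F_{n-1}\cH^{\lambda}$ as a combination of \emph{ordered} monomials of the same degree, the corrections introduced at each step lying in strictly lower filtration degree, with termination guaranteed by the misordered-pair measure used there. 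Consequently $F_{n}\cH^{\lambda}=\eta(\bigoplus_{i\le n}\cH(i))$ for every $n$. Combined with the bijectivity of $\eta$ (Proposition \ref{3.9}), this shows that $\eta$ carries the grading-filtration of $\cH$ exactly onto the filtration of $\cH^{\lambda}$, so that $\gr\eta:\cH=\gr\cH\to\gr\cH^{\lambda}$ is a graded linear isomorphism.

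It then remains to identify $p=\gr\eta$. On an ordered monomial $m$ tensored with $h\in H$, both maps are computed from the same element of $\tilde{R}\dotrtimes H$: indeed $\eta(m\otimes h)=\pi(m\otimes h)$, because $\eta$ and $\pi$ agree on $V$ and on $H$ and are algebra maps, whence $\gr\eta([m\otimes h])=[\pi(m\otimes h)]=p([m\otimes h])$. As the ordered monomials tensored with $H$ span $\cH$, this gives $p=\gr\eta$, which is therefore bijective; being induced by the Hopf algebra map $\gr\pi$, it is the asserted isomorphism of graded Hopf algebras.

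The main obstacle is the strictness claim of the middle paragraph, i.e.\ the PBW-type statement that the ordered monomials furnish a filtered basis of $\cH^{\lambda}$; this rests on the Diamond Lemma computation carried out in Proposition \ref{3.9}, the point being that the lower-order corrections in \eqref{eq3.4} do not disturb the top-degree ordered-monomial basis. Conceptually the strictness is transparent from Theorem \ref{3.10}: since $\cH^{\lambda}$ is a cocycle deformation of $\cH$, the two share the same coalgebra through $\eta$, and the natural filtration simply transports to the grading-filtration of $\cH$. One may equally phrase the argument through the bicleft object $\cA$, whose associated graded is again cleft over $\cH$, which is the Hopf--Galois theoretic form of the same reasoning.
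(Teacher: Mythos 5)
Your proof is correct, but it takes a genuinely different route from the paper's. You argue by strictness of the filtration: the reordering relations coming from (\ref{eq3.4}) give $F_{n}\cH^{\lambda}=\eta\bigl(\bigoplus_{i\leq n}\cH(i)\bigr)$ for every $n$, so the bijectivity of $\eta$ from Proposition \ref{3.9} forces $\gr\eta$ to be a graded linear isomorphism, which you then identify with the map induced by $\gr\pi$ by evaluating both on ordered monomials. The paper instead runs the whole argument through the bicleft object $\cA$ --- the route you only gesture at in your closing sentence: it filters $\cA$, notes that $\gr\phi$ and $\gr(\phi\circ\eta^{-1})$ make $\gr\cA$ a $(\gr\cH^{\lambda},\cH)$-bicleft object, shows that the induced map $\cH\rightarrow\gr\cA$ is an isomorphism of right $\cH$-cleft objects by \cite[Lemma 1.3]{M1}, and then concludes that $\cH\rightarrow\gr\cH^{\lambda}$ is bijective exactly as in the proof of Lemma \ref{1.5}, using the left Galois property of $\gr\cA$. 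Your approach buys concreteness: everything reduces to the commutation relations and the misordered-pair induction already set up in the proof of Proposition \ref{3.9}, and one sees explicitly that ordered monomials furnish a filtered basis of $\cH^{\lambda}$. The paper's approach buys uniformity: no element-level bookkeeping (in particular no separate verification that the induced map agrees with $\gr\eta$ on a spanning set), and the same cleft-object mechanism is reused elsewhere. One small caution on wording: $\eta$ is not an algebra map on all of $\cH$, so the identity $\eta(m\otimes h)=\pi(m\otimes h)$ on ordered monomials should be justified from the definition of $\eta$ as the product of the restrictions $e_{\alpha}$, $e_{0}$, each of which factors through the algebra map $\pi$ --- which is what you in effect do, but the phrase ``$\eta$ and $\pi$ agree on $V$ and on $H$ and are algebra maps'' is loose as stated.
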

\begin{proof}
$\cA$ is naturally filtered, just as $\cH^{\lambda}$ above, so that the bicomodule structure maps $\cH^{\lambda}\otimes\cA\leftarrow\cA\rightarrow\cA\otimes\cH$ are filtered algebra maps.
The associated graded algebra maps makes $\gr\cA$ into a $(\gr\cH^{\lambda},\cH)$-bicleft object, since the graded isomorphisms $\gr\phi$, $\gr(\phi\circ\eta^{-1})$ associated to the filtered $\phi$, $\phi\circ\eta^{-1}$ give sections;
see the proof of \cite[Proposition 4.4]{M3}.
Since one sees that $[v,w]=0$ in $\gr\cH^{\lambda}$, where $v\in V_{\alpha}$, $w\in V_{\beta}$ with $\alpha>\beta$, it follows that $\gr\pi$ factors through a graded Hopf algebra map
\begin{equation}
\label{eq3.13}
\cH\rightarrow\gr\cH^{\lambda}.
\end{equation}
Similarly, the natural graded map $\tilde{R}\dotrtimes H\rightarrow\gr\cA$ of right $\cH$-comodule algebras factors through
\begin{equation}
\label{eq3.14}
\cH\rightarrow\gr\cA,
\end{equation}
which, being a map of $\cH$-cleft objects, must be an isomorphism; see \cite[Lemma 1.3]{M1}.
Recall that $\gr\cA$ is left $\gr\cH^{\lambda}$-Galois, and $\cH$ is left $\cH$-Galois.
The isomorphism (\ref{eq3.14}) is compatible with the left coactions by $\cH$, $\gr\cH^{\lambda}$ through (\ref{eq3.13}).
Just as in the same way of proving in Lemma 1.5 that $\eta$ is an isomorphism, we see that (\ref{eq3.13}) is an isomorphism.
\end{proof}

\begin{remark}
\label{3.12}
(1) Since the coalgebra isomorphism $\eta : \cH\xrightarrow{\simeq}\cH^{\lambda}$ preserves the filtration, $\cH^{\lambda}$ is, as a coalgebra, graded so that $\cH^{\lambda}=\gr\cH^{\lambda}$.
Regarded as a map $\cH\rightarrow\gr\cH^{\lambda}$, $\eta$ coincides with the graded Hopf algebra isomorphism (\ref{eq3.13}).

(2) Suppose that the Hopf algebra $H$ is cosemisimple, and $R_{\alpha}$ are all Nichols algebras.
Then the graded Hopf algebra $\cH$ is coradically graded \cite[p.\ 15]{AS1}.
It follows by (1) above that the natural filtration on $\cH^{\lambda}$ then coincides with the coradical filtration \cite[p.\ 60]{Mo}.
As will be seen from Example \ref{5.4} below, this can apply to the quantized enveloping algebra $U_{q}$, when $q$ is not a root of $1$.
The result generalizes \cite[Theorem B]{CM}, which determines the coradical filtration on $U_{q}$ in some restricted situation.
\end{remark}

\section{Quantum double construction on pre-Nichols algebras}

Let $H$ be a Hopf algebra with bijective antipode, and let $\sigma : H\otimes H\rightarrow k$ be a $2$-cocycle.
Suppose $V\in{}^{H}_{H}\YD$.
Since $H=H^{\sigma}$ as a coalgebra, $V$ may be regarded as a left $H^{\sigma}$-comodule.
Denote this by ${}_{\sigma}V$, and endow ${}_{\sigma}V$ with the left $H^{\sigma}$-action $\rhu_{\sigma}$ defined by
\[ h\rhu_{\sigma}v=\sigma(h_{1},v_{-2})\sigma^{-1}(h_{2}v_{-1}\cS(h_{4}),h_{5})h_{3}\rhu v_{0}, \]
where $h\in H^{\sigma}$, $v\in{}_{\sigma}V$.
The following is a well-known result; cf.\ \cite[Proposition 1.1]{M3}.

\begin{proposition}
\label{4.1}
We have ${}_{\sigma}V\in{}^{H^{\sigma}}_{H^{\sigma}}\YD$.
Moreover, $V\mapsto{}_{\sigma}V$ gives a tensor equivalence ${}^{H}_{H}\YD\approx{}^{H^{\sigma}}_{H^{\sigma}}\YD$ preserving the braiding, to which is associated the tensor structure
\[ {}_{\sigma}V\otimes{}_{\sigma}W\xrightarrow{\simeq}{}_{\sigma}(V\otimes W),\quad v\otimes w\mapsto\sigma(v_{-1},w_{-1})v_{0}\otimes w_{0}, \]
where $V,W\in{}^{H}_{H}\YD$.
\end{proposition}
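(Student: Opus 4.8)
The plan is to exhibit $V\mapsto{}_\sigma V$, together with the stated map $J_{V,W}(v\otimes w)=\sigma(v_{-1},w_{-1})v_0\otimes w_0$, as a braided monoidal functor and then to produce a quasi-inverse. Conceptually, ${}^{H}_{H}\YD$ is realized as a Drinfeld centre, the passage $H\rightsquigarrow H^\sigma$ is a monoidal twist of the underlying situation, and a monoidal equivalence of base categories induces a braided monoidal equivalence of their centres; the formula for $\rhu_\sigma$ and the constraint $J$ are exactly the transported structures. Since we want this data explicitly, however, I would carry out the verification directly, using throughout the $2$-cocycle identities for $\sigma$ and the deformed antipode $\cS^\sigma$ of $H^\sigma$. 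The key algebraic input everywhere is the cocycle relation $\sigma(g_1,h_1)\sigma(g_2h_2,l)=\sigma(h_1,l_1)\sigma(g,h_2l_2)$ together with the normalization $\sigma(1,-)=\varepsilon=\sigma(-,1)$ and the analogous relations for $\sigma^{-1}$.

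First I would check that ${}_\sigma V\in{}^{H^\sigma}_{H^\sigma}\YD$. Its $H^\sigma$-comodule structure is the unchanged $\rho$, which is legitimate since $H=H^\sigma$ as coalgebras, so it remains only to see that $\rhu_\sigma$ is an action of the algebra $H^\sigma$ and that the Yetter--Drinfeld compatibility holds over $H^\sigma$. Unitality of $\rhu_\sigma$ is immediate from normalization. Associativity, namely $(g\cdot_\sigma h)\rhu_\sigma v=g\rhu_\sigma(h\rhu_\sigma v)$ with $\cdot_\sigma$ the product of $H^\sigma$, and the compatibility $\rho(h\rhu_\sigma v)=\bigl(h_1\cdot_\sigma v_{-1}\cdot_\sigma\cS^\sigma(h_3)\bigr)\otimes(h_2\rhu_\sigma v_0)$, are then obtained by expanding $\rhu_\sigma$ and repeatedly applying the cocycle relation, feeding in the original compatibility $\rho(h\rhu v)=h_1v_{-1}\cS(h_3)\otimes(h_2\rhu v_0)$ for $V$ over $H$ at the decisive point. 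I expect this last compatibility check to be the main obstacle: it forces one to reconcile the several $\sigma$ and $\sigma^{-1}$ factors hidden in $\rhu_\sigma$ with the three-fold deformed product and the deformed antipode, and it is here that the cocycle condition is used most heavily.

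Next I would show that $J_{V,W}$ is an isomorphism in ${}^{H^\sigma}_{H^\sigma}\YD$. Its $H^\sigma$-colinearity is the standard comodule-twist computation, the deformed coaction on a tensor product of $H^\sigma$-comodules being arranged precisely so as to absorb the factor $\sigma(v_{-1},w_{-1})$, while $\rhu_\sigma$-linearity is another, somewhat lengthier, cocycle manipulation. Invertibility of $J$ follows from $\sigma^{-1}$, with $J^{-1}_{V,W}(v\otimes w)=\sigma^{-1}(v_{-1},w_{-1})v_0\otimes w_0$. The pleasant step is the associativity coherence for $J$: after cancelling the trivial associators of vector spaces, the identity $J_{U\otimes V,W}\circ(J_{U,V}\otimes\id)=J_{U,V\otimes W}\circ(\id\otimes J_{V,W})$ is literally the $2$-cocycle identity applied to $u_{-1},v_{-1},w_{-1}$, and unit coherence comes from normalization. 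Thus $({}_\sigma(-),J)$ is a monoidal functor.

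Finally I would verify that the functor is braided, that is, $c'_{{}_\sigma V,{}_\sigma W}\circ J_{V,W}=J_{W,V}\circ c_{V,W}$, where $c$ and $c'$ denote the braidings $v\otimes w\mapsto(v_{-1}\rhu w)\otimes v_0$ computed with $\rhu$ and with $\rhu_\sigma$ respectively; this is once more a cocycle computation combining the definition of $\rhu_\sigma$ with the defining relation for $J$. To upgrade the functor to an equivalence I would invoke the remark after Proposition~1.3, that $\sigma^{-1}$ is a $2$-cocycle for $H^\sigma$ with $(H^\sigma)^{\sigma^{-1}}=H$; the functor $W\mapsto{}_{\sigma^{-1}}W:{}^{H^\sigma}_{H^\sigma}\YD\rightarrow{}^{H}_{H}\YD$ built from $\sigma^{-1}$ is then a quasi-inverse, the two composites being naturally isomorphic to the identity functors via the constraints $J$ and $J^{-1}$. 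This yields the asserted braided tensor equivalence with the stated tensor structure.
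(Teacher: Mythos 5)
Your outline is correct, and it is essentially the standard argument: the paper itself gives no proof of Proposition \ref{4.1}, dismissing it as well known with a pointer to \cite[Proposition 1.1]{M3}, where the verification is exactly the kind of direct cocycle computation you describe. Your key observations are the right ones: unitality of $\rhu_{\sigma}$ from the normalization of $\sigma$, the monoidal coherence of $J_{V,W}$ being literally the $2$-cocycle identity evaluated on $u_{-1},v_{-1},w_{-1}$, invertibility of $J$ via $\sigma^{-1}$, and the quasi-inverse coming from the fact (recorded in the paper after Definition \ref{1.4}) that $\sigma^{-1}$ is a $2$-cocycle for $H^{\sigma}$ with $(H^{\sigma})^{\sigma^{-1}}=H$; for that last step one should also note that ${}_{\sigma^{-1}}({}_{\sigma}V)=V$ on the nose, which is one more cocycle cancellation. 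You correctly identify the genuinely laborious point, namely the Yetter--Drinfeld compatibility of $\rhu_{\sigma}$ with the unchanged coaction over the three-fold deformed product and $\cS^{\sigma}$; this is where the several $\sigma^{\pm 1}$ factors in the definition of $\rhu_{\sigma}$ must be reconciled, and it does go through. The conceptual Drinfeld-centre framing you mention is a legitimate alternative high-level justification, but since you ultimately propose the explicit verification, your route and the one implicit in the paper's citation coincide.
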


Suppose that $R$ is a braided Hopf algebra in ${}^{H}_{H}\YD$.
By the tensor equivalence above, ${}_{\sigma}R$ turns into a braided Hopf algebra in ${}^{H^{\sigma}}_{H^{\sigma}}\YD$.

\begin{proposition}
\label{4.2}
(1) Explicitly, ${}_{\sigma}R$ has the deformed product and coproduct defined by
\begin{eqnarray*}
a\cdot b & := & \sigma(a_{-1},b_{-1})a_{0}b_{0}, \\
\Delta(a) & := & \sigma^{-1}((a_{1})_{-1},(a_{2})_{-1})(a_{1})_{0}\otimes(a_{2})_{0},
\end{eqnarray*}
where $a,b\in R$, while it has the same unit, counit and antipode as $R$.

(2) Regard $\sigma$ as a $2$-cocycle for $R\dotrtimes H$ along the obvious projection $R\dotrtimes H\rightarrow H$, and construct the cocycle deformation $(R\dotrtimes H)^{\sigma}$.
Then we have a Hopf algebra isomorphism
\[ {}_{\sigma}R\dotrtimes H^{\sigma}\xrightarrow{\simeq}(R\dotrtimes H)^{\sigma}\]
given by $a\otimes h\mapsto\sigma(a_{-1},h_{1})a_{0}\otimes h_{2}$.
\end{proposition}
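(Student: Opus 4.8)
The plan is to prove part (1) first, as a direct unpacking of the tensor equivalence of Proposition \ref{4.1}, and then to deduce part (2) by comparing the two Hopf-algebra structures via an explicit map. For part (1), recall that the braided Hopf algebra structure on ${}_{\sigma}R$ is transported from that on $R$ through the tensor equivalence ${}^{H}_{H}\YD\approx{}^{H^{\sigma}}_{H^{\sigma}}\YD$, whose tensor structure is the isomorphism $J_{V,W}:{}_{\sigma}V\otimes{}_{\sigma}W\to{}_{\sigma}(V\otimes W)$, $v\otimes w\mapsto\sigma(v_{-1},w_{-1})v_{0}\otimes w_{0}$. First I would write the multiplication $m_{\sigma}$ on ${}_{\sigma}R$ as the composite ${}_{\sigma}R\otimes{}_{\sigma}R\xrightarrow{J}{}_{\sigma}(R\otimes R)\xrightarrow{m}{}_{\sigma}R$, where $m$ is the original multiplication viewed as an $H^{\sigma}$-colinear map; evaluating $J$ gives immediately $a\cdot b=\sigma(a_{-1},b_{-1})a_{0}b_{0}$. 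Dually, the comultiplication is $\Delta_{\sigma}={}_{\sigma}R\xrightarrow{\Delta}{}_{\sigma}(R\otimes R)\xrightarrow{J^{-1}}{}_{\sigma}R\otimes{}_{\sigma}R$, and since $J^{-1}(u\otimes w)=\sigma^{-1}(u_{-1},w_{-1})u_{0}\otimes w_{0}$, one obtains the stated formula $\Delta(a)=\sigma^{-1}((a_{1})_{-1},(a_{2})_{-1})(a_{1})_{0}\otimes(a_{2})_{0}$. That the unit, counit, and antipode are unchanged follows because each is a morphism involving the unit object $k$, on which $J$ restricts to the identity; this is where I would invoke the fact that $\sigma(h,1)=\varepsilon(h)=\sigma(1,h)$.

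For part (2), define $\Psi:{}_{\sigma}R\dotrtimes H^{\sigma}\to(R\dotrtimes H)^{\sigma}$ by $\Psi(a\otimes h)=\sigma(a_{-1},h_{1})a_{0}\otimes h_{2}$. The strategy is to verify separately that $\Psi$ is a coalgebra map, an algebra map, and bijective. Since both sides share the underlying coalgebra of $R\dotrtimes H$ (cocycle deformation and bosonization each leave the coalgebra structure essentially intact, up to the colinear twist already accounted for in part (1)), checking that $\Psi$ respects comultiplication should reduce to a routine sigma-notation computation using the formula for $\Delta$ on ${}_{\sigma}R$ from part (1) together with the Yetter--Drinfeld compatibility $\rho(h\rhu v)=h_{1}v_{-1}\cS(h_{3})\otimes(h_{2}\rhu v_{0})$. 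The main work is checking that $\Psi$ is an algebra map: I would compute $\Psi((a\otimes h)\cdot(b\otimes g))$ on the left using the smash-product multiplication in ${}_{\sigma}R\dotrtimes H^{\sigma}$ (which involves the deformed action $\rhu_{\sigma}$ and the deformed product of part (1)) and compare it against $\Psi(a\otimes h)\cdot\Psi(b\otimes g)$ computed with the deformed product (\ref{eq1.3}) of $(R\dotrtimes H)^{\sigma}$.

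The hard part will be this last multiplicativity verification, because it forces the two distinct deformations — the internal twist $\sigma(a_{-1},b_{-1})$ coming from the braided structure of ${}_{\sigma}R$ on one side, and the external deformation $\sigma(g_{1},h_{1})\,g_{2}h_{2}\,\sigma^{-1}(g_{3},h_{3})$ of (\ref{eq1.3}) on the other — to match after the reindexing by $\Psi$. I expect the cocycle identity $\sigma(g_{1},h_{1})\sigma(g_{2}h_{2},l)=\sigma(h_{1},l_{1})\sigma(g,h_{2}l_{2})$ to be the essential tool that lets the various $\sigma$-factors be reshuffled, with the Yetter--Drinfeld comodule axiom supplying the interaction between the $H$-coaction on $R$ and the product in $H$. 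Once multiplicativity is established, bijectivity is immediate: $\Psi$ is a filtered or graded-preserving map whose associated linear map is the composite of $a\otimes h\mapsto a\otimes h$ with an invertible $\sigma$-twist, so it admits an explicit inverse $a\otimes h\mapsto\sigma^{-1}(a_{-1},h_{1})a_{0}\otimes h_{2}$, and the proof concludes.
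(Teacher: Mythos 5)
Your proposal is correct and matches the paper's approach: the paper gives no details, stating only that the proposition ``is directly verified'' (citing Propositions 1.12 and 1.13 of \cite{M3}), and your outline — transporting the (co)multiplication through the tensor structure of Proposition \ref{4.1} for part (1), then checking the explicit map is a bijective bialgebra morphism for part (2) — is exactly the direct verification being alluded to.
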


This is directly verified; cf.\ \cite[Propositions 1.12, 1.13]{M3}.

Let $J,K$ be Hopf algebras with bijective antipode.
Let $V\in{}^{J}_{J}\YD$, $W\in{}^{K}_{K}\YD$.
Let $R$ be a pre-Nichols algebra of $V$ in ${}^{J}_{J}\YD$, and let $S$ be a pre-Nichols algebra of $W$ in ${}^{K}_{K}\YD$.
The following is a reformulation of \cite[Theorem 8.3]{RS1} due to Radford and Schneider, in the slightly generalized context of pre-Nichols algebras.

\begin{theorem}
\label{4.3}
Suppose that a skew pairing $\tau : J\otimes K\rightarrow k$ is given.
Let $\lambda : W\otimes V\rightarrow k$ be a linear map such that
\begin{eqnarray}
\label{eq4.1}
\lambda(x\rhu w,v) & = & \tau(v_{-1},\cS(x))\lambda(w,v_{0}), \\
\label{eq4.2}
\lambda(w,a\rhu v) & = & \tau(a,w_{-1})\lambda(w_{0},v)
\end{eqnarray}
for all $a\in J$, $x\in K$, $v\in V$, $w\in W$.
Then, $\tau$ extends uniquely to a skew pairing
\[ \tau : (R\dotrtimes J)\otimes(S\dotrtimes K)\rightarrow k \]
such that
\[ \begin{array}{l}
\tau(a,w)=0=\tau(v,x), \vspace{5pt} \\
\tau(v,w)=-\lambda(w,v),
\end{array} \]
where $a\in J$, $x\in K$, $v\in V$, $w\in W$.
By the corresponding $2$-cocycle (see Proposition \ref{1.6}), the product on $(R\dotrtimes J)\otimes(S\dotrtimes K)$ is deformed so that
\begin{eqnarray}
\label{eq4.3}
x\cdot a & = & \tau(a_{1},x_{1})a_{2}\otimes x_{2}\tau(\cS(a_{3}),x_{3}), \\
\label{eq4.4}
x\cdot v & = & \tau(v_{-1},x_{1})v_{0}\otimes x_{2}, \\
\label{eq4.5}
w\cdot a & = & \tau(a_{1},w_{-1})a_{2}\otimes w_{0}, \\
\label{eq4.6}
w\cdot v & = & \tau(v_{-1},w_{-1})v_{0}\otimes w_{0}-\lambda(w,v)+w_{-1}\cdot v_{-1}\lambda(w_{0},v_{0}),
\end{eqnarray}
where $a\in J$, $x\in K$, $v\in V$, $w\in W$.
\end{theorem}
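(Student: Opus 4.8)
The plan is to derive Theorem \ref{4.3} from the main theorem, Theorem \ref{3.10}, by exhibiting the deformed Hopf algebra as the $\cH^{\lambda}$ attached to a single Yetter--Drinfeld datum, the correct base being the quantum double $D=(J\otimes K)^{\sigma_{\tau}}$ of $J$ and $K$; here $\sigma_{\tau}$ is the $2$-cocycle on $J\otimes K$ attached to $\tau$ by (\ref{eq1.4}), and $D$ factorizes into $J,K$ by Proposition \ref{1.6} and Remark \ref{1.7}. First I would regard $V$ (resp.\ $W$) as an object of ${}^{J\otimes K}_{J\otimes K}\YD$ through the Hopf subalgebra $J$ (resp.\ $K$), giving it trivial action and coaction on the complementary tensor factor; one checks directly that this is a Yetter--Drinfeld module and that $V,W$ are then symmetric with braiding the flip. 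Passing to $D$ by the tensor equivalence ${}^{J\otimes K}_{J\otimes K}\YD\approx{}^{D}_{D}\YD$ of Proposition \ref{4.1}, the images $V':={}_{\sigma_{\tau}}V$ and $W':={}_{\sigma_{\tau}}W$ lie in ${}^{D}_{D}\YD$ and remain symmetric (the equivalence preserves the braiding), with pre-Nichols algebras $R':={}_{\sigma_{\tau}}R$ and $S':={}_{\sigma_{\tau}}S$.

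The essential verification is to unwind the twisted structures and confirm the hypotheses of Notation \ref{3.1}. Applying the action formula of Proposition \ref{4.1} with $\sigma=\sigma_{\tau}$, one finds that the $K$-action on $V'$ is $x\rhu v=\tau(v_{-1},x)v_{0}$, that the $J$-action on $W'$ is the analogous one determined by $\tau$ through the $K$-coaction of $W$, and that the braiding sends $w\otimes v\mapsto\tau(v_{-1},w_{-1})v_{0}\otimes w_{0}$. I would then take, in the notation of Section 3, the index set $\{1,2\}$ with $1<2$, $V_{1}=V'$, $V_{2}=W'$, $R_{1}=R'$, $R_{2}=S'$, so that $Z=[W',V']$ and, under the identification of Remark \ref{3.2}, the map $\lambda\colon W'\otimes V'\to k$ is the given $\lambda$. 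The one genuine computation is that this $\lambda$ is $D$-linear, i.e.\ $\lambda\big(h\rhu(w\otimes v)\big)=\varepsilon(h)\lambda(w,v)$ for $h\in D$: expanding the $D$-action on $W'\otimes V'$ and contracting the resulting $\tau$-factors by the skew-pairing axioms, this reduces precisely to the two compatibilities (\ref{eq4.1}) and (\ref{eq4.2}). This is the step I expect to be the main obstacle, since it is where the hypotheses on $\lambda$ are consumed and where the twisted actions must be matched against $\tau$ without error.

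With the datum in hand, Theorem \ref{3.10} shows that the associated Hopf algebra $\cH^{\lambda}$ is a cocycle deformation of $\cH=(R'\otimes S')\dotrtimes D$. I would transport this back across the twist: by Proposition \ref{4.2}(2) (with $\sigma_{\tau}$ pulled back along the bosonization projection onto $J\otimes K$) one has $\cH\cong\big((R\otimes S)\dotrtimes(J\otimes K)\big)^{\sigma_{\tau}}$, and since $(R\otimes S)\dotrtimes(J\otimes K)$ is, via the trivial cross structures above, the tensor-product Hopf algebra $(R\dotrtimes J)\otimes(S\dotrtimes K)$, composing the two cocycles realizes $\cH^{\lambda}$ as a cocycle deformation of $(R\dotrtimes J)\otimes(S\dotrtimes K)$. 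The explicit cleft object and section furnished by Section 3 (together with Proposition \ref{4.2}) restrict to algebra maps on the two bosonizations, so Proposition \ref{1.6} applies: it yields the skew pairing $\tau$ on $(R\dotrtimes J)\otimes(S\dotrtimes K)$ inducing this cocycle---unique because a skew pairing is determined by its values on algebra generators---and delivers the product rule (\ref{eq4.3}) at once. Finally, specializing that product rule to the generators $v\in V$, $w\in W$, $a\in J$, $x\in K$ and computing the relevant coproducts in the two bosonizations, I would obtain (\ref{eq4.4}), (\ref{eq4.5}) and (\ref{eq4.6}); matching the scalar term of $w\cdot v$ against the deformed braided-commutator relation of Theorem \ref{3.10} returns exactly the prescribed values $\tau(a,w)=\tau(v,x)=0$ and $\tau(v,w)=-\lambda(w,v)$.
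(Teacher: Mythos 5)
Your proposal is correct and follows essentially the same route as the paper: form $H=J\otimes K$ with the $2$-cocycle $\sigma$ attached to $\tau$, twist $V,W,R,S$ into ${}^{H^{\sigma}}_{H^{\sigma}}\YD$ (noting that (\ref{eq4.1})--(\ref{eq4.2}) say exactly that $\lambda$ is $H^{\sigma}$-linear), apply Theorem \ref{3.10} to $\cH=({}_{\sigma}R\otimes{}_{\sigma}S)\dotrtimes H^{\sigma}$, and then untwist by $\sigma^{-1}$ via Proposition \ref{4.2}(2) so that Proposition \ref{1.6} identifies the resulting cocycle on $(R\dotrtimes J)\otimes(S\dotrtimes K)$ with a skew pairing and yields the product rules. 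The only cosmetic difference is that the paper carries out the final untwisting by explicitly transporting the bicleft object $\cA$ to $\cA_{\sigma}$ and reading off the section, rather than by ``composing cocycles,'' but the content is the same.
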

\begin{proof}
Set $H=J\otimes K$, the tensor-product Hopf algebra.
Let $\sigma : H\otimes H\rightarrow k$ denote the $2$-cocycle corresponding to $\tau$.
Regarding $V,W\in{}^{H}_{H}\YD$ in the obvious way, we have ${}_{\sigma}V$, ${}_{\sigma}W$ in ${}^{H^{\sigma}}_{H^{\sigma}}\YD$, which are symmetric to each other since $V$ and $W$ are.
Notice that the actions of $J$ ($\subset H^{\sigma}$) on ${}_{\sigma}V$, and of $K$ ($\subset H^{\sigma}$) on ${}_{\sigma}W$ remain the same as the original ones, while
\begin{eqnarray*}
x\rhu_{\sigma}v & = & \tau(v_{-1},x)v_{0}\quad (x\in K,\ v\in{}_{\sigma}V), \\
a\rhu_{\sigma}w & = & \tau(a,\cS(w_{-1}))w_{0}\quad (a\in J,\ w\in{}_{\sigma}W).
\end{eqnarray*}
Moreover, ${}_{\sigma}R,{}_{\sigma}S$ remain to be $R,S$ except that the actions are deformed so as above.
Notice that the conditions required to $\lambda$ is precisely that $\lambda : {}_{\sigma}W\otimes{}_{\sigma}V\rightarrow k$ should be $H^{\sigma}$-linear.
Define
\[ \cH := ({}_{\sigma}R\otimes{}_{\sigma}S)\dotrtimes H^{\sigma}. \]
This is such a Hopf algebra that factorizes into two Hopf subalgebras, $R\dotrtimes J$, $S\dotrtimes K$; see the paragraph following Lemma \ref{1.5}.
By Theorem \ref{3.10}, the $H^{\sigma}$-linear map $\lambda$, regarded as a map $[{}_{\sigma}W,{}_{\sigma}V]\rightarrow k$ (see Remark \ref{3.2}), gives a cocycle deformation $\cH^{\lambda}$ of $\cH$.
One sees that $\cH^{\lambda}$ also factorizes so as $\cH^{\lambda}=(R\dotrtimes J)\otimes(S\dotrtimes K)$ into the two Hopf subalgebras, and obeys the product rules (\ref{eq4.3})--(\ref{eq4.6}); for (\ref{eq4.5}), recall $\tau^{-1}(a,x)=\tau(a,\cS^{-1}(x))$.
The $(\cH^{\lambda},\cH)$-bicleft object, say $\cA$, given by Theorem \ref{3.10} (see also Definition \ref{3.6}), is defined on the factorized algebra $(R\dotrtimes J)\otimes (S\dotrtimes K)$, and by the product rules (\ref{eq4.3}), (\ref{eq4.4}), (\ref{eq4.5}) and
\begin{equation}
\label{eq4.7}
w\cdot v=\tau(v_{-1},w_{-1})v_{0}\otimes w_{0}-\lambda(w,v)\quad (v\in V,\ w\in W).
\end{equation}

Set $\cG=(R\dotrtimes J)\otimes(S\dotrtimes K)$, the tensor product Hopf algebra.
Proposition \ref{4.2} (2) gives a natural identification $\cH=\cG^{\sigma}$.
Therefore, $\sigma^{-1}$ is a $2$-cocycle for $\cH$, and $\cH^{\sigma^{-1}}=\cG$.
We have a tensor equivalence between right comodule categories,
\[ U\mapsto U_{\sigma},\quad \cM^{\cH}\approx\cM^{\cG}, \]
just as $V\mapsto{}_{\sigma}V$ in Proposition \ref{4.1}, but on the opposite side.
Notice that $\cA_{\sigma}$ is an $(\cH^{\lambda},\cG)$-bicleft object, which is defined on $(R\dotrtimes J)\otimes(S\dotrtimes K)$, and by the product rules
\[ x\cdot a=\tau(a_{1},x_{1})a_{2}\otimes x_{2}\quad (a\in J,\ x\in K), \]
(\ref{eq4.4}), (\ref{eq4.5}) and (\ref{eq4.7}).
By applying the counit to the right-hand sides of these four equations, we see from Proposition \ref{1.6} that $\cA_{\sigma}$ together with the obvious section $\cG\xrightarrow{\simeq}\cA_{\sigma}$ gives rise to such a $2$-cocycle that corresponds to the skew pairing $\tau$ claimed above to exist, and that $\cH^{\lambda}$ is the cocycle deformation of $\cG$ by that $2$-cocycle.
\end{proof}

\section{How our results fit in with quantized enveloping algebras}

Let us see how our results are specialized to the situation in which the quantized enveloping algebras are involved.
Let $\Gamma$ be an abelian group, and let $\hat{\Gamma}$ denote the dual group of all group maps $\Gamma\rightarrow k^{\times}=k\setminus 0$.
Let $k\Gamma$ denote the group Hopf algebra.
A left Yetter-Drinfeld module, or an object in ${}^{k\Gamma}_{k\Gamma}\YD$, is precisely a left $k\Gamma$-module $V$ which is at the same time $\Gamma$-graded, $V=\bigoplus_{g\in\Gamma}V_{g}$, so that for each $g\in\Gamma$, the $g$-component $V_{g}$ in $V$ is $k\Gamma$-stable.
A pair $(g,\chi)$ in $\Gamma\times\hat{\Gamma}$ defines a one-dimensional object $V=kx$ in ${}^{k\Gamma}_{k\Gamma}\YD$, by
\[ h\rhu x=\chi(h)x\quad (h\in\Gamma),\quad V=V_{g}. \]
Every one-dimensional object in ${}^{k\Gamma}_{k\Gamma}\YD$ arises uniquely from a pair in $\Gamma\times\hat{\Gamma}$. \\

\noindent
{\rm\bf Definition 5.1 [9].}
An object in $V$ ($\neq 0$) in ${}^{k\Gamma}_{k\Gamma}\YD$ is said to be {\em of diagonal type}, if it is a (direct) sum of one-dimensional subobjects, or in other words if it has a basis $(x_{i})$ such that each $x_{i}$ spans a subobject.
\addtocounter{theorem}{1} \\

Let $V=\bigoplus_{i\in X}kx_{i}$ be such as above, where the basis $(x_{i})$ is supposed to be indexed by a set $X$.
Suppose that $kx_{i}$ corresponds to the pair $(g_{i},\chi_{i})$ in $\Gamma\times\hat{\Gamma}$.
By convention (see [9,10]), we write
\[ q_{ij}=\chi_{j}(g_{i}). \]
By using the associated braiding $c : V\otimes V\xrightarrow{\simeq}V\otimes V$, the scalar $q_{ij}$ is characterized as the coefficient in
\[ c(x_{i}\otimes x_{j})=q_{ij}x_{j}\otimes x_{i}. \]
The braided commutator is given by
\[ [x_{i},x_{j}]=x_{i}\otimes x_{j}-q_{ij}x_{j}\otimes x_{i}. \]

We suppose that $X$ is a disjoint union $X=\bigsqcup_{\alpha\in\Lambda}X_{\alpha}$ of non-empty subsets $X_{\alpha}$ indexed by a set $\Lambda$.
Introduce a total order onto $\Lambda$.
Set
\[ V_{\alpha}=\bigoplus_{i\in X_{\alpha}}kx_{i}. \]
This is an object in ${}^{k\Gamma}_{k\Gamma}\YD$ of diagonal type, such that $V=\bigoplus_{\alpha\in\Lambda}V_{\alpha}$.
Let $|i|$ denote $\alpha$ when $i\in X_{\alpha}$.
Suppose that if $\alpha\neq\beta$ in $\Lambda$, then $V_{\alpha}$ and $V_{\beta}$ are symmetric.
This means that
\[ q_{ij}q_{ji}=1\ \ \mbox{if}\ \ |i|\neq|j|. \]
For each pair $\alpha>\beta$ in $\Lambda$, choose a $k\Gamma$-linear map $\lambda : [V_{\alpha},V_{\beta}]\rightarrow k$.
Set
\[ \lambda_{ij}=\lambda[x_{i},x_{j}]\quad (|i|>|j|). \]
Since $\Gamma$ acts on $[x_{i},x_{j}]$ via $\chi_{i}\chi_{j}$, it follows that
\begin{equation}
\label{eq5.1}
\lambda_{ij}=0\ \ \mbox{if}\ \ \chi_{i}\chi_{j}\neq 1.
\end{equation}
We see that choosing $\lambda$ is equivalent to choosing those parameters $\lambda_{ij}$ ($|i|>|j|$) in $k$ which satisfy (\ref{eq5.1}).
It is reasonable to define $\lambda_{ji}=-q_{ji}\lambda_{ij}$, since $[x_{j},x_{i}]=-q_{ji}[x_{i},x_{j}]$.
Andruskiewitsch and Schneider [9,10] call those parameters (more precisely, $-\lambda_{ij}$ in minus sign) {\em linking parameters}.

For each $\alpha\in\Lambda$, choose a pre-Nichols algebra $R_{\alpha}$ of $V_{\alpha}$, and let $I_{\alpha}$ denote its defining ideal.
By Lemma \ref{3.5}, the tensor product
\[ R:=\bigotimes_{\alpha\in\Lambda}R_{\alpha} \]
taken along the order on $\Lambda$ is a pre-Nichols algebra of $V$, whose bosonization we denote as before, by
\[ \cH=R\dotrtimes k\Gamma. \]
Recall that since $R$ is graded, $\cH$ is a graded Hopf algebra with $\cH(n)=R(n)\otimes k\Gamma$.
Let $\cH^{\lambda}$ denote the quotient algebra of $TV\dotrtimes k\Gamma$ by the ideal generated by all $I_{\alpha}$ together with the elements
\[ x_{i}\otimes x_{j}-q_{ij}x_{j}\otimes x_{i}-\lambda_{ij}(g_{i}g_{j}-1)\quad (|i|>|j|). \]
By Lemma \ref{3.8}, $\cH^{\lambda}$ is a quotient Hopf algebra, which coincides with $\cH$ if all $\lambda_{ij}=0$.
We regard $\cH^{\lambda}$ as a filtered Hopf algebra with respect to the natural filtration inherited from $TV\dotrtimes k\Gamma$.
Compose the tensor product of the natural algebra maps $R_{\alpha}\rightarrow\cH^{\lambda}$, $k\Gamma\rightarrow\cH^{\lambda}$ with the product on $\cH^{\lambda}$, and let
\[ \eta : \cH=\bigotimes_{\alpha\in\Lambda}R_{\alpha}\otimes k\Gamma\rightarrow\bigotimes_{\alpha\in\Lambda}\cH^{\lambda}\otimes\cH^{\lambda}\rightarrow\cH^{\lambda} \]
denote the resulting map.
Proposition \ref{3.9}, Theorem \ref{3.10} and Remark \ref{3.12} (actually, some part of them) are specialized as follows.

\begin{theorem}
\label{5.2}
(1) $\eta : \cH\rightarrow\cH^{\lambda}$ is a coalgebra isomorphism.
It is in fact an isomorphism of filtered coalgebras, whence $\cH^{\lambda}$ is, as a coalgebra, graded so that $\cH^{\lambda}=\gr\cH^{\lambda}$.

(2) $\eta : \cH\xrightarrow{\simeq}\cH^{\lambda}$ turns into a Hopf algebra isomorphism, by replacing $\cH$ with some cocycle deformation $\cH^{\sigma}$.

(3) By (1), we can regard $\eta$ as a map $\cH\rightarrow\gr\cH^{\lambda}$.
In this case, $\eta$ is an isomorphism of graded Hopf algebras.
\end{theorem}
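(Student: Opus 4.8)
The plan is to obtain Theorem~\ref{5.2} as a direct specialization of the general results already proved in Sections~3, rather than to reprove anything from scratch. The key observation is that $H=k\Gamma$ is a Hopf algebra with bijective antipode, and each $V_\alpha$ of diagonal type is automatically an object in ${}^{k\Gamma}_{k\Gamma}\YD$; moreover the symmetry hypothesis $q_{ij}q_{ji}=1$ for $|i|\neq|j|$ is precisely the statement that $V_\alpha$ and $V_\beta$ are symmetric in the sense of the paragraph above Definition~\ref{2.1}. Thus Notation~\ref{3.1} applies verbatim once we check that the chosen scalars $\lambda_{ij}$ assemble into a $k\Gamma$-linear map $\lambda\colon Z\rightarrow k$. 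For that I would verify that the constraint~(\ref{eq5.1}) is exactly the $k\Gamma$-linearity condition: $\Gamma$ acts on $[x_i,x_j]$ through the character $\chi_i\chi_j$, so $k\Gamma$-linearity forces $\lambda_{ij}=0$ unless $\chi_i\chi_j=1$, and conversely any such assignment extends linearly to a $k\Gamma$-linear $\lambda$.

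Next I would identify the concrete presentation of $\cH^\lambda$ given here with the one from Definition~\ref{3.6}(2). The general defining relations~(\ref{eq3.4}) read $[v,w]+\lambda[v,w]-v_{-1}w_{-1}\lambda[v_0,w_0]$; specializing $v=x_i$, $w=x_j$ with $|i|>|j|$, we have $(x_i)_{-1}=g_i$, $(x_j)_{-1}=g_j$ in $k\Gamma$, and $[x_i,x_j]=x_i\otimes x_j-q_{ij}x_j\otimes x_i$, so the relation becomes $x_i\otimes x_j-q_{ij}x_j\otimes x_i+\lambda_{ij}-g_ig_j\lambda_{ij}$, which is exactly the stated generator $x_i\otimes x_j-q_{ij}x_j\otimes x_i-\lambda_{ij}(g_ig_j-1)$. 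Hence the algebra $\cH^\lambda$ defined in this section coincides with the general $\cH^\lambda$ of Definition~\ref{3.6}, and the map $\eta$ defined here is the specialization of the $\eta$ of Proposition~\ref{3.9}.

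With these identifications in place, the three parts follow immediately. Part~(1): Proposition~\ref{3.9} gives that $\eta$ is a coalgebra isomorphism preserving unit, and Remark~\ref{3.12}(1) gives that $\eta$ respects the filtration, so $\cH^\lambda=\gr\cH^\lambda$ as a coalgebra. Part~(2): Theorem~\ref{3.10} provides the $2$-cocycle $\sigma$ arising from $(\cH,\phi)$ and shows $\eta$ becomes a Hopf algebra isomorphism $\cH^\sigma\xrightarrow{\simeq}\cH^\lambda$, which is exactly the assertion that $\cH^\lambda$ is a cocycle deformation of $\cH$. Part~(3): Corollary~\ref{3.11} (together with Remark~\ref{3.12}(1), which identifies $\eta$ regarded as a map $\cH\rightarrow\gr\cH^\lambda$ with the graded isomorphism~(\ref{eq3.13})) shows that $\eta\colon\cH\rightarrow\gr\cH^\lambda$ is an isomorphism of graded Hopf algebras.

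I do not anticipate a genuine obstacle, since the theorem is explicitly billed as a specialization. The only point requiring real care is the bookkeeping translation between the intrinsic description of $\lambda$ as a $k\Gamma$-linear map on $Z=\bigoplus_{\alpha>\beta}[V_\alpha,V_\beta]$ and the parametrization by the scalars $\lambda_{ij}$ satisfying~(\ref{eq5.1}); one must confirm that this is a genuine bijection between the two data and that the resulting relations match~(\ref{eq3.4}) term-by-term after computing the comodule structure $v\mapsto v_{-1}\otimes v_0$ explicitly in $k\Gamma$. Once that dictionary is fixed, every clause of the theorem is a literal reading of the corresponding general statement, so the proof reduces to citing Proposition~\ref{3.9}, Theorem~\ref{3.10}, Corollary~\ref{3.11} and Remark~\ref{3.12}.
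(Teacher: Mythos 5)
Your proposal is correct and follows exactly the route the paper intends: the paper gives no separate proof of Theorem \ref{5.2}, stating only that it is Proposition \ref{3.9}, Theorem \ref{3.10} and Remark \ref{3.12} "specialized" to the diagonal-type setting, and your dictionary (the identification of (\ref{eq5.1}) with $k\Gamma$-linearity of $\lambda$ and the term-by-term matching of the generators with (\ref{eq3.4})) is precisely the verification that specialization requires. The citations you assemble for each of the three parts are the right ones.
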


Instead of restating Theorem \ref{4.3} in the present special situation, we will give its consequence, Theorem \ref{5.3}.
It is a direct generalization of \cite[Theorems 4.4, 4.11]{RS2} due to Radford and Schneider.
In most parts, we will follow their argument.

Keep the notation as above.
But, we suppose that
\[ \Lambda=\{+,-\}, \]
consisting of the two elements $+,-$ with $+>-$.
To each $j\in X_{-}$, associate a symbol $g_{i}'$.
Let $\Gamma'$ denote the free abelian group on the set $\{g_{j}'\;|\;j\in X_{-}\}$.
Define $\chi_{j}'\in\hat{\Gamma}'$ ($j\in X_{-}$) by
\[ \chi_{j}'(g_{k}'):=\chi_{j}(g_{k})=q_{kj}\quad (j,k\in X_{-}). \]
We can regard $V_{-}$ (resp., $R_{-}$) as an object (resp., its pre-Nichols algebra) in ${}^{k\Gamma'}_{k\Gamma'}\YD$, by
\[ g_{k}'\rhu x_{j}=q_{kj}x_{j},\quad x_{j}\in(V_{-})_{g_{j}'}\quad (j,k\in X_{-}). \]
Define Hopf algebras by
\[ \cH_{-}=R_{-}\dotrtimes k\Gamma',\quad \cH_{+}=R_{+}\dotrtimes k\Gamma. \]
We have a bimultiplicative map
\[ \tau : \Gamma'\times\Gamma\rightarrow k^{\times},\quad \tau(g_{j}',g)=\chi_{j}(g), \]
which extends uniquely to a (skew) pairing $\tau : k\Gamma'\otimes k\Gamma\rightarrow k$ of Hopf algebras.
As for this $\tau$, the linear map
\[ \lambda : V_{+}\otimes V_{-}\rightarrow k,\quad \lambda(x_{i},x_{j})=\lambda_{ij} \]
satisfies the conditions (\ref{eq4.1}), (\ref{eq4.2}) in Theorem \ref{4.3}, which now read
\[ \chi_{i}(g)\lambda_{ij}=\chi_{j}(g^{-1})\lambda_{ij},\quad \chi_{j}(g_{k})\lambda_{ij}=\chi_{k}(g_{i})\lambda_{ij}, \]
where $i\in X_{+}$, $j\in X_{-}$, $g\in\Gamma$.
To verify these equations, we may suppose $\lambda_{ij}\neq 0$, and so $\chi_{i}\chi_{j}=1$, in which case $\chi_{i}(g)=\chi_{j}(g^{-1})$, $\chi_{j}(g_{k})=\chi_{k}(g_{i})$, indeed.
By the last cited theorem, $\tau$ extends uniquely to a skew pairing $\tau : \cH_{-}\otimes\cH_{+}\rightarrow k$, so that
\[ \tau(g_{j}',x_{i})=0=\tau(x_{j},g),\quad \tau(x_{j},x_{i})=-\lambda_{ij}, \]
where $i\in X_{+}$, $j\in X_{-}$, $g\in\Gamma$.
Let $\sigma : (\cH_{-}\otimes\cH_{+})^{\otimes 2}\rightarrow k$ denote the corresponding $2$-cocycle.
The cocycle deformation $(\cH_{-}\otimes\cH_{+})^{\sigma}$ factorizes into Hopf subalgebras, $\cH_{-},\cH_{+}$, and obeys the product rules as given by (\ref{eq4.3})--(\ref{eq4.6}), which now read
\begin{eqnarray*}
g\cdot g_{j}' & = & g_{j}'\otimes g, \\
g\cdot x_{j} & = & \chi_{j}(g)x_{j}\otimes g, \\
x_{i}\cdot g_{j}' & = & \chi_{j}(g_{i})g_{j}'\otimes x_{i}, \\
x_{i}\cdot x_{j} & = & \chi_{j}(g_{i})x_{j}\otimes x_{i}+\lambda_{ij}(g_{j}'\otimes g_{i}-1),
\end{eqnarray*}
where $i\in X_{+}$, $j\in X_{-}$, $g\in\Gamma$.
We see that for each $j\in X_{-}$, $g_{j}'\otimes g_{j}^{-1}$ is a central grouplike in $(\cH_{-}\otimes\cH_{+})^{\sigma}$.
Therefore, the elements $g_{j}'\otimes g_{j}^{-1}-1$ ($j\in X_{-}$) generate a Hopf ideal, say $Q$, in $(\cH_{-}\otimes\cH_{+})^{\sigma}$.

\begin{theorem}
\label{5.3}
$x_{i}\mapsto 1\otimes x_{i}$ ($i\in X_{+}$), $x_{j}\mapsto x_{j}\otimes 1$ ($j\in X_{-}$) and $g\mapsto 1\otimes g$ ($g\in\Gamma$) give an isomorphism
\[ \cH^{\lambda}\xrightarrow{\simeq}(\cH_{-}\otimes\cH_{+})^{\sigma}/Q \]
of Hopf algebras.
\end{theorem}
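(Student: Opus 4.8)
The plan is to realize the stated assignment as a Hopf algebra isomorphism $\Phi:\cH^{\lambda}\to(\cH_{-}\otimes\cH_{+})^{\sigma}/Q$ and to produce an explicit two-sided inverse, so that the whole argument reduces to checking relations and comparing images of generators.

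First I would verify that $\Phi$ is a well-defined Hopf algebra map. By Definition \ref{3.6} and the discussion in Section 5, $\cH^{\lambda}$ is presented by the generators $x_{i}$ $(i\in X)$ and $g$ $(g\in\Gamma)$ subject to the group relations of $\Gamma$, the bosonization relations $g x_{i}=\chi_{i}(g)x_{i}g$, the relations of the $I_{\alpha}$, and the linking relations $x_{i}x_{j}-q_{ij}x_{j}x_{i}=\lambda_{ij}(g_{i}g_{j}-1)$ for $|i|>|j|$. I would check that the images $1\otimes x_{i}$, $x_{j}\otimes 1$, $1\otimes g$ satisfy each relation in the target: the group relations and the $I_{\alpha}$-relations hold inside the Hopf subalgebras $\cH_{+}=R_{+}\dotrtimes k\Gamma$ and $\cH_{-}=R_{-}\dotrtimes k\Gamma'$; the commutation relations follow from the product rules $g\cdot x_{j}=\chi_{j}(g)x_{j}\otimes g$ and $g x_{i}=\chi_{i}(g)x_{i}g$; and the linking relation follows from $x_{i}\cdot x_{j}=\chi_{j}(g_{i})x_{j}\otimes x_{i}+\lambda_{ij}(g_{j}'\otimes g_{i}-1)$ once $Q$ identifies $g_{j}'\otimes 1$ with $1\otimes g_{j}$, so that $g_{j}'\otimes g_{i}$ collapses to $1\otimes g_{i}g_{j}$. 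That $\Phi$ is a coalgebra map is immediate on generators, since the cocycle deformation leaves the coproduct unchanged and, modulo $Q$, the grouplike $g_{j}'\otimes 1$ occurring in $\Delta(x_{j}\otimes 1)$ is replaced by $1\otimes g_{j}$.

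Next I would build the inverse. Let $\Psi:(\cH_{-}\otimes\cH_{+})^{\sigma}\to\cH^{\lambda}$ be the map that is the natural inclusion on the generators $x_{i},g$ of $\cH_{+}$ and on the generators $x_{j}$ of $\cH_{-}$, and that sends $g_{j}'\mapsto g_{j}$. To see $\Psi$ is a well-defined algebra map I would check it respects the factorized product rules; the two nontrivial ones are (\ref{eq4.5}) and (\ref{eq4.6}). Rule (\ref{eq4.5}), here $x_{i}\cdot g_{j}'=\chi_{j}(g_{i})g_{j}'\otimes x_{i}$, maps under $g_{j}'\mapsto g_{j}$ to the identity $x_{i}g_{j}=\chi_{j}(g_{i})g_{j}x_{i}$, which holds in $\cH^{\lambda}$ precisely because $\chi_{i}(g_{j})\chi_{j}(g_{i})=q_{ij}q_{ji}=1$ by the symmetry of $V_{+}$ and $V_{-}$; and rule (\ref{eq4.6}) maps to $x_{i}x_{j}=\chi_{j}(g_{i})x_{j}x_{i}+\lambda_{ij}(g_{i}g_{j}-1)$, which is exactly the linking relation defining $\cH^{\lambda}$ (the specialization of (\ref{eq3.4})). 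Since $\Psi(g_{j}'\otimes g_{j}^{-1}-1)=g_{j}g_{j}^{-1}-1=0$, the Hopf ideal $Q$ lies in $\Ker\Psi$, so $\Psi$ descends to a Hopf algebra map $\bar{\Psi}:(\cH_{-}\otimes\cH_{+})^{\sigma}/Q\to\cH^{\lambda}$. Comparing images of generators then shows $\Phi$ and $\bar{\Psi}$ are mutually inverse, completing the proof.

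I expect the main obstacle to be the bookkeeping that reconciles the two presentations under the collapse effected by $Q$: concretely, confirming that imposing $g_{j}'\otimes g_{j}^{-1}=1$ turns the skew-pairing-deformed rule (\ref{eq4.6}) into exactly the linking relation (\ref{eq3.4}), while the commutation rule (\ref{eq4.5}) becomes a consequence of the symmetry hypothesis $q_{ij}q_{ji}=1$. The conceptual point to get right is that $Q$ merely identifies the enlarged group $\Gamma'\times\Gamma$ with $\Gamma$ (via $g_{j}'\mapsto g_{j}$) and does not further collapse the $R_{\pm}$ directions; this is what guarantees $\bar{\Psi}$ is genuinely inverse to $\Phi$ rather than only a surjection, and if desired it can be read off from the triangular decomposition $R_{-}\otimes R_{+}\otimes k\Gamma\xrightarrow{\simeq}\cH^{\lambda}$ furnished by the coalgebra isomorphism $\eta$ of Theorem \ref{5.2}, which matches the analogous factorized basis of the quotient.
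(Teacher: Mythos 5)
Your proposal is correct and follows essentially the same route as the paper: the paper likewise checks well-definedness of the stated assignment against the product rules and then deduces bijectivity from the identification $R_{-}\otimes R_{+}\otimes k\Gamma=(\cH_{-}\otimes\cH_{+})^{\sigma}/Q$, matched against the triangular decomposition of $\cH^{\lambda}$ from Theorem \ref{5.2}. Your only deviation is to establish bijectivity by writing down the explicit inverse $\bar{\Psi}$ (correctly verified against the rules (\ref{eq4.5}), (\ref{eq4.6}) and the symmetry $q_{ij}q_{ji}=1$), which amounts to the same bookkeeping.
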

\begin{proof}
By definition of $\cH^{\lambda}$, and by the product rules given above, the correspondences well define a Hopf algebra map, which is an isomorphism since we see $R_{-}\otimes R_{+}\otimes k\Gamma=(\cH_{-}\otimes\cH_{+})^{\sigma}/Q$.
\end{proof}

Radford and Schneider \cite[Theorems 4.4, 4.11]{RS2} prove this result in some restricted situation, especially such that those $(i,j)\in X_{+}\times X_{-}$ for which $\lambda_{ij}\neq 0$ appear in pair, that is, if $\lambda_{ij}\neq 0$, then $\lambda_{lj}=0=\lambda_{ik}$ whenever $l\neq i$, $k\neq j$.
This restriction can be removed.

Finally, let us explain how the quantized enveloping algebras and their analogues or generalizations are presented as our $\cH^{\lambda}$.
This will be parallel with \cite[Remarks 1.7, 1.8]{RS2}, in which Radford and Schneider explain how those algebras are presented as their own $U(\cDred,l)$.

\begin{example}
\label{5.4}
Let $A=(a_{ij})$ be an $n\times n$ generalized Cartan matrix (GCM) symmetrized by a diagonal matrix $\diag(d_{1},\dotsc,d_{n})$ with $0<d_{i}\in\bbz$.
Choose $q\in k^{\times}$.
Set $q_{i}=q^{d_{i}}$, and suppose that the order $\ord(q_{i}^{2})$ is greater than all of $1$, $-a_{ij}$ ($i\neq j$).
Let $U_{q}$ denote the associated, standard quantized enveloping algebra.
The grouplikes in $U_{q}$ form a free abelian group, say $\Gamma$, which is isomorphic to such a lattice $\gh_{\bbz}$ in the Cartan subalgebra $\gh$ realizing $A$ that satisfies appropriate assumptions;
see the beginning of \cite[Section 7]{M3}, for example.
With the standard notation, $U_{q}$ contains generators $E_{i},F_{i}$ ($1\leq i\leq n$) such that
\[ \Delta E_{i}=E_{i}\otimes 1+K_{i}\otimes E_{i},\quad \Delta F_{i}=F_{i}\otimes K_{i}^{-1}+1\otimes F_{i}, \]
where $K_{i}\in\Gamma$.
We see that the $n$ elements
\[ x_{i}:=E_{i}\quad\mbox{(resp., $x_{-i}:=F_{i}K_{i}$)}\quad (1\leq i\leq n) \]
span an object, say $V_{+}$ (resp., $V_{-}$), in ${}^{k\Gamma}_{k\Gamma}\YD$ of diagonal type, such that
\[ K_{i}\rhu x_{\pm j}=q_{i}^{\pm a_{ij}}x_{\pm j},\quad x_{\pm i}\in(V_{\pm})_{K_{i}}, \]
and it generates a pre-Nichols algebra, say $R_{+}$ (resp., $R_{-}$), defined by the quantum Serre relations;
it is known to be a Nichols algebra at least when $\ch k=0$, $q$ is not a root of $1$, and the GCM $A$ is of finite type.
Since $V_{+}$ and $V_{-}$ are symmetric, as is easily seen, we have a Hopf algebra, $\cH=(R_{-}\otimes R_{+})\dotrtimes k\Gamma$, which involves the relations
\begin{equation}
\label{eq5.2}
x_{i}x_{-j}=q_{i}^{-a_{ij}}x_{-j}x_{i}\ \ \mbox{or}\ \ E_{i}F_{j}=F_{j}E_{i}\quad (1\leq i,j\leq n).
\end{equation}
Since $\Gamma$ acts trivially on each $x_{i}\otimes x_{-i}$, we can deform $\cH$ to $\cH^{\lambda}$ by
\[ \lambda_{i,-j}=\delta_{ij}\frac{1}{q_{i}-q_{i}^{-1}}\quad (1\leq i,j\leq n). \]
In $\cH^{\lambda}$, (\ref{5.2}) is deformed so as
\begin{eqnarray*}
x_{i}x_{-j} & = & q_{i}^{-a_{ij}}x_{-j}x_{i}+\delta_{ij}\frac{K_{i}^{2}-1}{q_{i}-q_{i}^{-1}} \\
\mbox{or}\ \ \ \ E_{i}F_{j} & = & F_{j}E_{i}+\delta_{ij}\frac{K_{i}-K_{i}^{-1}}{q_{i}-q_{i}^{-1}}.
\end{eqnarray*}
Therefore we see $U_{q}=\cH^{\lambda}$.
(To be more rigorous, we must abstractly define $V_{\pm},R_{\pm},\cH^{\lambda}$ in the obvious manner as will be seen from the above argument, and prove that $x_{i}\mapsto E_{i}$, $x_{-i}\mapsto F_{i}K_{i}$ give an isomorphism $\cH^{\lambda}\xrightarrow{\simeq}U_{q}$ over $k\Gamma$.)
By Theorem \ref{5.2}, $U_{q}$ is a cocycle deformation of $\cH$;
this was proved by \cite[Theorem 7.8]{M3}, generalizing the result by Kassel and Schneider \cite{KS} in the special case when $A$ is of finite type.
By Theorem \ref{5.3}, $U_{q}$ is a quotient Hopf algebra of the quantum double
\[ ((R_{-}\dotrtimes k\Gamma')\otimes(R_{+}\dotrtimes k\Gamma))^{\sigma}, \]
where $\Gamma'$ is the free abelian group on the set $\{K_{i}\;|\;1\leq i\leq n\}$.
$R_{+}\dotrtimes k\Gamma$ equals the Hopf subalgebra $U_{q}^{\geq 0}$ of $U_{q}$ generated by $\Gamma,E_{1},\dotsc,E_{n}$.
In a modified situation (or when we suppose $\det A\neq 0$), we have $\Gamma'=\Gamma$, in which case $R_{-}\dotrtimes k\Gamma'$ equals the Hopf subalgebra $U_{q}^{\leq 0}$ of $U_{q}$ generated by $\Gamma,F_{1},\dotsc,F_{n}$.
Then the last result reads
\[ U_{q}=(U_{q}^{\leq 0}\otimes U_{q}^{\geq 0})^{\sigma}/(K_{i}\otimes 1-1\otimes K_{i}\;|\;1\leq i\leq n). \]
Joseph \cite[3.2.9]{J} chooses this as definition of $U_{q}$.
\end{example}

\begin{example}
\label{5.5}
Keep the notation as above.
Let us give two variations of $U_{q}$.
By modifying slightly the construction above, we can present those as $\cH^{\lambda}$, to which, therefore, our results can apply.

(1) The quantized enveloping superalgebras \cite{FLV}, \cite{Y}.
A main difference from the standard $U_{q}$ is that $\Gamma$ contains a grouplike of order $2$, and the $R_{\pm}$ require some defining relations other than the quantum Serre relations.

(2) Lustzig's small quantum groups $u_{q}$ \cite{L}.
This is defined as a finite-dimensional quotient of $U_{q}$, assuming that the GCM $A$ is of finite type, and $q$ is a root of $1$ of odd order $>3$.
For this $u_{q}$, the $R_{\pm}$ are Nichols algebras defined by the quantum Serre relations together with such relations of the form
\begin{equation}
\label{eq5.3}
x_{\pm\alpha}^{N}=0,
\end{equation}
where $x_{\pm\alpha}$ ($\in R_{\pm}$) are the so-called root vectors associated to each positive root $\alpha$.
\end{example}

\begin{example}
\label{5.6}
Andruskiewitsch and Schneider [9,10] defined finite-dimensional pointed Hopf algebras $u(\cD,\lambda,\mu)$ with finite abelian group $\Gamma$, say, of grouplikes, which generalize Lustzig's $u_{q}$.
They involve two sets $\lambda,\mu$ of parameters, called {\em linking parameters}, {\em root vector parameters}, respectively.
If $\lambda=\mu=0$ (that is, if all parameters are zero), $u(\cD,0,0)$ is the bosonization $R\dotrtimes k\Gamma$ of the braided tensor product $R:=R_{1}\otimes\dotsb\otimes R_{m}$, where each $R_{i}$ is the Nichols algebra of a diagonal type object, and looks likes the $R_{\pm}$ in Example \ref{5.5} (2).
(More precisely, $R_{i}$ is a cocycle deformation of $R_{\pm}$ by a group $2$-cocycle \cite[Sectin 1.2]{AS2}.)
Therefore we can take $u(\cD,0,0)$ as our $\cH$.
As a main difference from preceding examples, $\lambda$ can {\em link} components $R_{1},\dotsc,R_{m}$ possibly more than two.
On the other hand, $\mu$ plays the role of giving such relations $x_{\alpha}^{N}=u_{\alpha}(\mu)$ that generalize (\ref{eq5.3}).
If $\mu=0$, $u(\cD,\lambda,0)$ is presented as $\cH^{\lambda}$, in the same way as in the preceding examples; see \cite[Appendix]{M3}.
By Theorem \ref{5.2}, $u(\cD,\lambda,0)$ is a cocycle deformation of $u(\cD,0,0)$; this was proved by Didt \cite{Di}.
By some additional argument, it is proved by \cite[Theorem A.1]{M3} (cf.\ \cite[Theorem 3.5]{GM}) that $u(\cD,\lambda,\mu)$ in general is a cocycle deformation of $u(\cD,0,0)$.
\end{example}

\end{document}